\newtheorem{theorem}{Theorem}[section]
\newtheorem{lemma}[theorem]{Lemma}
\theoremstyle{definition}
\newtheorem{definition}[theorem]{Definition}
\newtheorem{example}[theorem]{Example}
\theoremstyle{remark}
\begin{document}
\subjclass[2010]{Primary 37F10, Secondary 30D05}
\title{Slow escape in tracts}
\author{James Waterman}
\address{School of Mathematics and Statistics, The Open University, Walton Hall, Milton Keynes MK7 6AA, UK}
\curraddr{}
\email{james.waterman@open.ac.uk}
\date{}
\maketitle \begin{abstract}
Let $f$ be a transcendental entire function. By a result of Rippon and Stallard, there exist points whose orbit escapes arbitrarily slowly. By using a range of techniques to prove new covering results, we extend their theorem to prove the existence of points which escape arbitrarily slowly within logarithmic tracts and tracts with certain boundary properties. We then give examples to illustrate our results in a variety of tracts. 
\end{abstract}
\section{Introduction}
Let $f:\mathbb{C} \rightarrow \mathbb{C}$ be a transcendental entire function and denote the $n$th iterate of $f$ by $f^n$, for $n=0,1,2,\ldots$. The \textit{Fatou set}, $F(f)$, is defined to be the subset of $\mathbb{C}$ where the iterates $( f^n)$ of $f$ form a normal family, and its complement is the \textit{Julia set} $J(f)$.  An introduction to properties of these sets can be found in a survey article of Bergweiler \cite{B93}.

The \textit{escaping set} $I(f)$ is
\[I(f)=\{z: f^n(z) \rightarrow \infty ~\text{ as }~ n \rightarrow \infty\},\]
and it is a result of Eremenko in \cite{EremenkoEscape} that $\partial I(f)=J(f)$, $I(f) \cap J(f) \neq \emptyset$, and $\overline{I(f)}$ has no bounded components.
Further, Eremenko conjectured that all components of $I(f)$ are unbounded. While this conjecture is still open, progress has been made by showing that $I(f)$ has at least one unbounded component \cite{AfUnbounded}. This was shown by considering the \textit{fast escaping set} $A(f)$.
This set was first introduced by Bergweiler and Hinkkanen in \cite{OFast} and can be defined by
\[A(f) = \{ z: ~\text{there exists}~ L \in \mathbb{N} ~\text{such that}~ |f^{n+L}(z)|\geq M^n (r) ~\text{for}~ n \in \mathbb{N}\}\] 
as in \cite{Fast}, where 
\[M(r)= \max _{|z|=r}|f(z)| ~ ~ \text{for} ~r>0\]
and $r$ is such that $M(r) >r$ for $r\geq R$. Note $M^n(r)$ denotes iteration of $M(r)$ and $\mathbb{N}$  denotes the non-negative integers.

The set $A(f)$ is independent of $R$ and has similar properties to $I(f)$ in that $\partial A(f) = J(f) $ and $A(f) \cap J(f) \neq \emptyset$. Importantly, all components of $A(f)$ are unbounded by a result of Rippon and Stallard \cite{AfUnbounded}. Further, Bergweiler, Rippon, and Stallard \cite{Tracts} considered fast escaping points in a tract $D$ and defined
\[A(f,D, \rho)=\{z \in D: f^n(z) \in D ~\text{and}~ |f^n(z)| \geq M^n_D(\rho) ~\text{for} ~ n \in \mathbb{N}\}, \]
where
\[M_D(\rho)=\max_{|z|=\rho, ~z\in D} |f(z)|.\] They proved that $A(f,D,\rho)\neq \emptyset$ and that all components of $A(f,D,\rho)$ are unbounded. (The definition of a tract is given at the beginning of Section 2.)

In \cite[Theorem 1]{Slow}, Rippon and Stallard showed there are always escaping points that are not fast escaping. In fact, they showed that there exist points in $J(f)$ that escape arbitrarily slowly. They further proved a two-sided slow escape result \cite[Theorem 2]{Slow}, showing that, for many entire functions, the orbit of a slow escaping point can be controlled to lie between two constant multiples of a specified sequence.

The aim of this paper is to generalize these two results to prove the existence of points which escape arbitrarily slowly within certain tracts. We begin by proving the existence of points which escape arbitrarily slowly within any prescribed logarithmic tract. If there is only one tract, then this result follows directly from Rippon and Stallard's theorem. However, if there is more than one tract, then all we know from their theorem is that there is a point which escapes suitably slowly within the union of these tracts. 
\begin{theorem}\label{MySlow1}
Let $f$ be a transcendental entire function with a logarithmic tract~$D$. Then, given any positive sequence $(a_n)$ such that $a_n \rightarrow \infty$ as $n \rightarrow \infty$, there exists
\[\zeta \in I(f) \cap J(f) \cap \overline{D} ~and~ N \in \mathbb{N}\]
such that
\[f^n ( \zeta) \in \overline{D}, ~\text{for}~ n\geq 1,\]
and
\[|f^n(\zeta)| \leq a_n, ~\text{for}~ n \geq N.\]
\end{theorem}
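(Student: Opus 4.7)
The strategy is to adapt the nested compact-continuum construction of Rippon and Stallard~\cite{Slow}, but with all continua confined to $\overline{D}$; the key new input is a covering lemma for logarithmic tracts. First I would, if necessary, replace $(a_n)$ by a slower increasing sequence, and pick an auxiliary sequence $(\rho_n)$ with $\rho_n\to\infty$ and $\rho_n<a_n$. Setting $A_n=\overline{D}\cap\{\rho_n\le|w|\le a_n\}$, the plan is to construct by induction a decreasing sequence of non-empty compact continua $E_N\supset E_{N+1}\supset\cdots$ in $\overline{D}$ with $f^k(E_n)\subset A_k$ for $N\le k\le n$, and such that $f^k(E_n)$ joins the two bounding level arcs of $A_k$ inside the tract. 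Any $\zeta\in\bigcap_n E_n$ then satisfies $|f^n(\zeta)|\le a_n$ and, since $\rho_n\to\infty$, $\zeta\in I(f)$.

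The heart of the argument is a covering lemma: for sufficiently large $r<s$ with $s/r$ at least some absolute constant, if $\Gamma\subset\overline{D}$ is a continuum joining $|z|=r$ and $|z|=s$, then $f(\Gamma)$ contains a continuum in $\overline{D}$ joining any prescribed pair of circles $|w|=\rho'$ and $|w|=a'$ in an appropriate range determined by $r$ and $s$. To prove it, I would pass to the logarithmic coordinate $F=\log f\circ\exp$ on a component $\tilde T$ of $\exp^{-1}(D)$; since $D$ is logarithmic, $F\colon\tilde T\to H$ is a conformal bijection onto a right half-plane $H$. A continuum in $\tilde T$ joining two vertical lines then has image in $H$ of unbounded oscillation in the imaginary direction (via standard hyperbolic-metric expansion estimates for $F$), so its exponential $f(\Gamma)$ wraps many times around the origin inside $D$ and covers every modulus in the required range.

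With the covering lemma in hand, the induction is routine: given $E_n$, the continuum $f^{n+1}(E_n)=f(f^n(E_n))$ contains, by the lemma applied to the crosscut $f^n(E_n)\subset A_n$, a subcontinuum crossing $A_{n+1}$; pulling this subcontinuum back through $f^{n+1}$ restricted to $E_n$ yields $E_{n+1}$. Compactness then gives $\zeta\in\bigcap_n E_n$, and $\zeta\in \overline D$ with $f^n(\zeta)\in\overline D$ for $n\ge 1$ and $|f^n(\zeta)|\le a_n$ for $n\ge N$. To obtain $\zeta\in J(f)$, I would either refine the construction so that each $E_n$ meets points approaching $\partial D\subset J(f)$, or apply a Baire-category argument on continua, as in~\cite{Slow}.

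The principal obstacle I expect is formulating the covering lemma so that the windows it produces at consecutive steps are compatible with the arbitrarily slow growth of $(a_n)$: we have no a priori control on how quickly $f$ expands $f^n(E_n)$, so one must carefully select a subcontinuum at each stage to avoid overshooting $a_{n+1}$. A secondary technical point is to keep all subcontinua inside $\overline{D}$ and avoid straying into other tracts, and this is where the logarithmic hypothesis is essential: it guarantees that $f|_D$ is a universal covering onto $\{|w|>R_0\}$, so every modulus in the target can be reached from within $D$ and single-valued inverse branches exist on any simply connected subset of the target.
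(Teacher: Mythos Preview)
Your covering lemma, as stated, is false, and this is the central gap. You claim that for any continuum $\Gamma\subset\overline{D}$ joining $|z|=r$ to $|z|=s$, the image $f(\Gamma)$ covers every modulus in some prescribed range. But take $f(z)=e^z$ with logarithmic tract $D=\{\operatorname{Re} z>0\}$: the vertical segment from $1+i\sqrt{r^2-1}$ to $1+i\sqrt{s^2-1}$ joins $|z|=r$ to $|z|=s$ inside $D$, yet $|f|\equiv e$ on it. More generally, a continuum in $D$ may lie on or near a single level curve of $|f|$, so $|f(\Gamma)|$ need not vary at all. Your argument in logarithmic coordinates does not rescue this: the Eremenko--Lyubich expansion $|F'(t)|\ge\tfrac{1}{4\pi}\operatorname{Re} F(t)$ bounds the \emph{length} of $F(\tilde\Gamma)$ from below, but says nothing about how that length is distributed between the real direction (which controls $|f|$) and the imaginary direction (which controls $\arg f$). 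So neither ``unbounded oscillation in the imaginary direction'' nor ``covers every modulus in the required range'' follows. The same example shows that the inductive step breaks immediately: if $f^n(E_n)$ happens to be such a vertical segment, then $f^{n+1}(E_n)$ lies on a single circle and cannot cross $A_{n+1}$.

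The paper avoids this by working not with arbitrary continua but with the full annular sectors $\Sigma_n=A(r_n,2r_n)\cap D$. The point is that for every $R\in[1,M_D(r_n)]$ some segment of the level curve $\{|f|=R\}$ crosses $\Sigma_n$ from $|z|=r_n$ to $|z|=2r_n$; applying the Eremenko--Lyubich estimate along that level curve (where $|f|$ is constant, so the expansion is purely angular) shows that $f$ maps it onto the full circle $\{|w|=R\}$ once $R$ exceeds an absolute constant. Hence $f(\Sigma_n)\supset\overline{A}(e^{16\pi^2},M_D(r_n))\supset\overline{\Sigma}_n\cup\overline{\Sigma}_{n+1}$. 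With this two-sided covering in hand, the slow-escape construction is packaged into a general topological lemma (Theorem~\ref{Finish} in the paper): the self-cover $f(\overline\Sigma_n)\supset\overline\Sigma_n$ lets one insert arbitrarily many delays before advancing to $\overline\Sigma_{n+1}$, and the standard $f(E_k)\supset E_{k+1}$ intersection argument produces the orbit. The Julia-set conclusion also falls out of the self-cover, since it forces each $\overline\Sigma_n$ to contain non-escaping points and hence to meet $J(f)$. Your nested-continuum scheme could in principle be repaired, but only if you maintain extra structure on the continua at every stage---for instance, that each one meets $\partial D$ (where $|f|$ takes the boundary value) and also contains a point where $|f|$ is large; without such control the induction fails.
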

We also prove a two-sided slow escape result for logarithmic tracts. Note that this result is stronger than Rippon and Stallard's two-sided slow escape result in that the modulus of the iterates can be controlled to both lie between a given sequence and any given constant multiple of that sequence, and further converge to the given sequence.

\begin{theorem}\label{MySlow2}
Let $f$ be a transcendental entire function with a logarithmic tract~$D$. Then, given any positive sequence $(a_n)$ such that $a_n \rightarrow \infty$ as $n \rightarrow \infty$ and satisfying ${a_{n+1}=O(M_D(a_n))}$ as $n \rightarrow \infty$ there exists
\[\zeta \in  J(f) \cap \overline{D} ~and~ N \in \mathbb{N},\]
such that
\[f^n ( \zeta) \in \overline{D}, ~\text{for}~ n\geq 1,\]
and
\[1\leq\frac{|f^n(\zeta)|}{a_n} \leq 1+o(1), ~\text{for}~ n \geq N.\]
\end{theorem}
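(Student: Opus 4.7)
The plan is to combine the inverse-image construction used to prove Theorem \ref{MySlow1} with the two-sided control technique from \cite[Theorem 2]{Slow}. I would work in the logarithmic coordinate: let $T$ be a component of $\exp^{-1}(D)$ and let $F \colon T \to H$ denote the logarithmic transform, a conformal isomorphism onto a right half-plane $H$. The assumption that $D$ is a logarithmic tract is exactly what makes $F$ a conformal isomorphism, giving access to the Koebe distortion theorem on branches of $F^{-1}$.

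The first step would be a sharp thin-annulus covering lemma, presumably the main technical tool underlying Theorem \ref{MySlow1} (or a refinement thereof). The version needed here should say: for $w \in D$ with $|w|$ large and for any target radius $r'$ with $|w| \lesssim r' \lesssim M_D(|w|)$, the preimages of the circle $\{|\zeta| = r'\}$ under $f$ lying in $D$ fill a prescribed annulus around a chosen preimage of $w$; distortion control on $F^{-1}$ should allow the relative thickness of this annulus to be made arbitrarily small as $|w|\to\infty$.

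With this in hand, I would construct inductively a nested sequence of non-empty compact sets $E_0 \supset E_1 \supset \cdots$ in $\overline{D}$ and a sequence $\varepsilon_n \downarrow 0$ so that
\[
f^n(E_n) \subset \{z \in \overline{D} : a_n \leq |z| \leq (1+\varepsilon_n)\,a_n\},
\]
choosing $\varepsilon_n$ to decay slowly enough that the covering lemma applies at every step. The hypothesis $a_{n+1} = O(M_D(a_n))$ is used precisely at this point: it guarantees that the target annulus around $a_{n+1}$ is a legitimate image under $f|_D$ of points near the annulus around $a_n$, so the inductive preimage selection never breaks down. Any $\zeta \in \bigcap_n E_n$ then satisfies $f^n(\zeta) \in \overline{D}$ for all $n \geq 1$ and $1 \leq |f^n(\zeta)|/a_n \leq 1 + \varepsilon_n$ for $n \geq N$; the conclusion $\zeta \in J(f)$ follows from the standard fact that points whose orbit escapes to infinity while remaining in a logarithmic tract lie in $J(f)$.

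The main obstacle is producing the covering lemma with the \emph{thin}-annulus conclusion rather than merely a fixed-ratio conclusion. A crude covering statement (preimages in an annulus of bounded but nondegenerate relative thickness) would only yield $1 \leq |f^n(\zeta)|/a_n \leq C$ for some fixed $C$, recovering the strength of \cite[Theorem 2]{Slow} but not the $1 + o(1)$ sharpening advertised here. Extracting the $o(1)$ improvement should come from exploiting Koebe distortion for $F^{-1}$ on disks well inside $H$, together with the freedom offered by the logarithmic tract to choose which branch and which preimage point to use at each step, so that the shrinking factor beats the tolerance $\varepsilon_n$.
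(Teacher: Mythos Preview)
Your approach is viable but genuinely different from the paper's. The paper does not work with Koebe distortion on branches of $F^{-1}$ or with nested preimages. Instead it proves a \emph{forward} covering lemma (Lemma~\ref{BCover}): by integrating the Eremenko--Lyubich derivative estimate along level curves one shows that $f(A(r_0,cr_0)\cap D)$ contains the closed annulus $\bar A\bigl(\exp(8\pi^2 c/(c-1)),\,M_D(r_0)\bigr)$. For the two-sided result the paper then sets $\Sigma_n=A(\tfrac{C}{c}a_n,\,Ca_n)\cap D$, checks $f(\overline\Sigma_n)\supset\overline\Sigma_{n+1}$ via this lemma and Lemma~\ref{M_D^c}, and applies the forward topological Lemma~\ref{TopLemma}; the $1+o(1)$ sharpening comes from letting the parameters $C_j\to1$, $c_j\in(1,C_j)$ along a sequence, balanced so that $a_n>\exp(8\pi^2 c_j/(c_j-1))$ in the $j$th block. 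Your Koebe-based route would instead bound the diameter of a $2\pi$-height piece of a level curve by $O(1/\log r')$, which automatically shrinks---arguably a cleaner way to get the $o(1)$---but your phrasing of the covering lemma (``preimages of the circle \ldots\ fill a prescribed annulus'') is not quite right: the preimage is a curve, and what you need is that a full-turn arc of it fits inside the thin annulus.

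One point to tighten: the claim that an escaping orbit confined to a logarithmic tract lies in $J(f)$ is true (essentially the Eremenko--Lyubich expansion argument restricted to such orbits), but it is not the ``standard fact'' you cite and would need a line of proof. The paper instead argues that $J(f)$ meets each $\overline\Sigma_n$ by invoking Baker's theorem that a function bounded on a curve to infinity has no unbounded multiply connected Fatou component, hence all $J(f)$-components are unbounded, hence $J(f)\cap D$ meets every sufficiently large annulus; then it re-runs Lemma~\ref{TopLemma} on $\overline\Sigma_n\cap J(f)$.
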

Note that both \Cref{MySlow1} and \Cref{MySlow2} readily generalize to a prescribed orbit through a finite number of tracts. More care must be taken with an infinite number of tracts. However, with a few minor restrictions on the growth of the given sequence compared to the location of the tracts, these two results still follow from the proofs of \Cref{MySlow1} and \Cref{MySlow2}.

The proofs in \cite{Slow} rely on certain annulus covering results. However, the techniques used to prove these results do not readily generalize to the case of a tract. The proofs of \Cref{MySlow1} and \Cref{MySlow2} instead rely on an annulus covering result obtained by using a derivative estimate  within a logarithmic tract due to Eremenko and Lyubich  \cite{EL92}. In the case where a tract is not logarithmic, this derivative estimate need not hold and in \Cref{HarmSec} we show that if the boundary of the tract is suitably well behaved in a certain precise sense, then we can prove a covering result by using the harmonic measure of a component of the boundary. We say that such tracts have `bounded geometry with respect to harmonic measure' and the covering result enables us to prove the following. 

\begin{theorem}\label{MySlow3}
Let $f$ be a transcendental entire function and let $D$ be a tract of $f$ with bounded geometry with respect to harmonic measure. Then, given any positive sequence $(a_n)$ such that $a_n \rightarrow \infty$ as $n \rightarrow \infty$, there exists
\[\zeta \in I(f) \cap J(f) \cap \overline{D} ~and~ N \in \mathbb{N}\]
such that
\[f^n ( \zeta) \in \overline{D}, ~\text{for}~ n\geq 1,\]
and
\[|f^n(\zeta)| \leq a_n, ~\text{for}~ n \geq N.\]
\end{theorem}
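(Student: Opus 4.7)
The plan is to mirror the inductive back-orbit construction used for \Cref{MySlow1}, replacing only the one ingredient that genuinely used the logarithmic hypothesis, namely the Eremenko--Lyubich annulus covering estimate, by the harmonic-measure-based covering lemma promised in \Cref{HarmSec}. After replacing the given sequence $(a_n)$ by an auxiliary sequence $R_n \leq a_n$ with $R_n \to \infty$ that is tailored to the covering lemma (a standard reduction, since any point satisfying the stronger bound $|f^n(\zeta)| \leq R_n$ also satisfies the required one), the proof proceeds in two stages: first construct a nested sequence of preimage continua inside $\overline{D}$; then extract a limit point and verify it lies in $I(f) \cap J(f) \cap \overline{D}$.

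For the construction, I would inductively produce compact connected sets $E_0 \supset f^{-1}(E_1) \cap \overline{D} \supset f^{-2}(E_2) \cap \overline{D} \supset \cdots$ with $E_n \subset \overline{D} \cap \{|z| \leq R_n\}$, each $E_n$ meeting $\partial D$ at a level comparable to $R_n$. The harmonic-measure covering lemma should supply, for each sufficiently large $r$, the existence of a preimage component in $D$ of a prescribed boundary arc of $\{|w| = R_{n+1}\} \cap D$ that is contained in a controlled annulus at level comparable to $R_n$; this is what replaces the Eremenko--Lyubich covering estimate used in \Cref{MySlow1}. By compactness the intersection $\bigcap_n f^{-n}(E_n)$ contains a point $\zeta$. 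Since the $E_n$ can be arranged to meet $\partial D \subset J(f)$, we obtain $\zeta \in J(f)$; and since $|f^n(\zeta)|$ is forced to be comparable to $R_n \to \infty$ by the level-by-level choice, $\zeta \in I(f)$.

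The principal obstacle is verifying that the harmonic-measure covering lemma is strong enough to power this induction. The logarithmic case enjoys a pointwise derivative bound on the logarithmic transform of $f$, which delivers the covering directly. Here one must instead transfer harmonic-measure estimates for boundary arcs of $D$ through a conformal uniformization of $D$ into metric covering statements for $f$ itself, and the bounded-geometry-with-respect-to-harmonic-measure hypothesis is precisely what keeps the distortion from spiralling out of control. Once the covering lemma from \Cref{HarmSec} is in hand, the remaining inductive and compactness arguments go through essentially verbatim from the proof of \Cref{MySlow1}, which is why I would expect the bulk of the work to sit in \Cref{HarmSec} rather than in the proof of \Cref{MySlow3} itself.
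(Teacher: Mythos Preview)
Your high-level plan---replace the Eremenko--Lyubich estimate by the harmonic-measure covering of \Cref{HarmSec} and then rerun the construction behind \Cref{MySlow1}---is exactly what the paper does, but two of your specific choices diverge from it in ways that matter.

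First, the paper does not build a nested chain of preimage continua. The definition of bounded geometry (\Cref{BGRHM}) already hands you the quadrilaterals $\Sigma_n\subset D$ and the points $z_n\in\Sigma_n$, and the covering lemma (\Cref{MCover}) is a \emph{forward} statement: it shows that $f(\Sigma_n)$ contains a closed annulus $\bar A(|f(\phi_n^{-1}(\eta_n))|,|f(z_n)|)$. The proof of \Cref{MySlow3} consists of choosing $\eta_n$ so that this annulus swallows both $\overline{\Sigma}_n$ and $\overline{\Sigma}_{n+1}$, yielding $f(\overline{\Sigma}_n)\supset\overline{\Sigma}_n\cup\overline{\Sigma}_{n+1}$, and then invoking \Cref{Finish}. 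Your inverse-branch picture (preimage components of boundary arcs of $\{|w|=R_{n+1}\}$ landing in controlled annuli) is not what \Cref{MCover} provides, and recasting the lemma in that form would cost extra work for no gain.

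Second, and more seriously, your argument for $\zeta\in J(f)$ has a genuine gap: you write that arranging the $E_n$ to meet $\partial D$ suffices because $\partial D\subset J(f)$, but $\partial D$ is only the level set $\{|f|=1\}$ and there is no reason for it to lie in the Julia set. The paper sidesteps this by delegating to \Cref{Finish}, whose Julia-set step uses a different mechanism: the self-covering $f(\overline{\Sigma}_n)\supset\overline{\Sigma}_n$ produces, via \Cref{TopLemma}, a point with bounded orbit in $\overline{\Sigma}_n$; if $\overline{\Sigma}_n\subset F(f)$ then normality together with the escaping point already constructed would force $\overline{\Sigma}_n\subset I(f)$, contradicting that bounded orbit. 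Hence every $\overline{\Sigma}_n$ meets $J(f)$, and complete invariance lets one rerun the construction inside $J(f)$. You should replace your $\partial D$ argument by this one, or simply cite \Cref{Finish} once the two covering relations are established.
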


The organization of this paper is the following. Section 2 is devoted to introducing tracts and giving a preliminary result on constructing slow escaping points in general. Section 3 focuses on proving \Cref{MySlow1} and \Cref{MySlow2}. Section 4 focuses on proving \Cref{MySlow3}. Finally, Section 5 gives two examples of tracts to which our results can be applied.

\section{Tracts and constructing slow escaping points}

We recall the classification of the singularities of the inverse function due to Iversen \cite{Iversen} as well as the definition of a tract, following terminology found in \cite{Tracts}. Let $f$ be an entire function and consider $a \in \widehat{\mathbb{C}}$. For $R>0$, let $U_R$ be a component of $f^{-1}(D(a,R))$ (where $D(a,R)$ is the
open disc centered at $a$ with radius $R$ with respect to the spherical metric) chosen so that $R_1< R_2$ implies that $U_{R_1} \subset U_{R_2}$. Then either $\bigcap_{R} U_R = \{z\}$ for some unique $z \in \mathbb{C}$ or $\bigcap_{R} U_R = \emptyset$.

In the first case, $a=f(z)$ and $a$ is an \textit{ordinary point} if $f'(z) \neq 0$, or $a$ is a \textit{critical value} if $f'(z)=0$. In the second case,  $f$ has a \textit{transcendental singularity} over $a$. The transcendental singularity is called \textit{direct} if $f(z) \neq a$ for all $z \in U_R$, for some $R>0$. Otherwise it is \textit{indirect}. Further, a direct singularity is called \textit{logarithmic} if $f: U_R \rightarrow D(a,R) \setminus \{a\}$ is a universal covering. These $U_R$ are called \textit{tracts} for $f$. Note that the structure of these tracts depends on $R$. We will mainly restrict $f$ to a specific tract, which motivates the following definition.

\begin{definition}
Let $D$ be an unbounded domain in $\mathbb{C}$ whose boundary consists of piecewise smooth curves and suppose that the complement of $D$ is unbounded. Further, let $f$ be a complex valued function whose domain of definition contains the closure $\overline{D}$ of $D$. Then $D$ is called a \textit{direct tract} of $f$ if $f$ is holomorphic in $D$, continuous in $\overline{D}$, and if there exists $R>0$ such that $|f(z)|=R$ for $z \in \partial D$ while $|f(z)| > R$ for $ z \in D$. If, in addition, the restriction $f:D \rightarrow \{z \in \mathbb{C}: |z| > R\}$ is a universal covering, then $D$ is called a \textit{logarithmic tract}.
\end{definition}
For transcendental entire functions, all tracts are direct tracts, and so we will take a tract to mean a direct tract. For simplicity, in proving the main results of this paper, we assume that the tract $D$ is a component of $\{z:|f(z)|>1\}$. The proof of the general case is similar. Note, a tract is logarithmic if it contains no critical points and the restriction of the function to the tract has no finite asymptotic values.

In order to prove our results on slow escaping points, the following topological lemma (see, for example, {\cite[Lemma 1]{Slow}}) is used to obtain an orbit passing through a sequence of specified compact sets.
\begin{lemma}\label{TopLemma}
Let $E_n$, $n\geq 0$, be a sequence of nonempty compact sets in $\mathbb{C}$ and $f: \mathbb{C} \rightarrow \hat{\mathbb{C}}$ be a continuous function such that
\[f(E_n) \supset E_{n+1}, ~\text{for}~ n \geq 0.\]
Then there exists $\zeta$ such that $f^n(\zeta) \in E_n$, for $n\geq 0$.
\end{lemma}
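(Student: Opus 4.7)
The plan is to extract an orbit meeting the prescribed sets by a nested compact sets argument in the spirit of Cantor's intersection theorem. First, for each $N\geq 0$, I would introduce the set of starting points in $E_0$ whose first $N+1$ iterates correctly visit the prescribed targets,
\[
F_N := E_0 \cap f^{-1}(E_1) \cap f^{-2}(E_2) \cap \cdots \cap f^{-N}(E_N),
\]
equivalently $F_N = \{z \in E_0 : f^i(z) \in E_i \text{ for } 0 \leq i \leq N\}$. The set $\bigcap_{N\geq 0} F_N$ is precisely the set of admissible $\zeta$, so it suffices to show this intersection is nonempty.

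Next I would verify that each $F_N$ is nonempty by a backward induction using the hypothesis. Pick any $w_N \in E_N$; by $f(E_{N-1}) \supset E_N$ there is $w_{N-1} \in E_{N-1}$ with $f(w_{N-1}) = w_N$, and iterating this reasoning yields $w_0, w_1, \ldots, w_N$ with $w_i \in E_i$ and $f(w_i) = w_{i+1}$. Then $w_0 \in F_N$. I would then check that each $F_N$ is compact: each $E_n$ is compact in $\mathbb{C}$, hence closed in $\widehat{\mathbb{C}}$, so continuity of $f:\mathbb{C}\to\widehat{\mathbb{C}}$ makes each $f^{-i}(E_i)$ closed in $\mathbb{C}$, and the intersection of these closed sets with the compact set $E_0$ is compact.

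Finally, since the $F_N$ form a decreasing nested sequence of nonempty compact sets, the finite intersection property (Cantor's intersection theorem) delivers $\bigcap_{N \geq 0} F_N \neq \emptyset$, and any $\zeta$ in this intersection has $f^n(\zeta) \in E_n$ for all $n \geq 0$, as required.

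No serious obstacle is expected; the only point demanding a moment's care is that the codomain of $f$ is $\widehat{\mathbb{C}}$ rather than $\mathbb{C}$, but because each $E_n$ is a compact subset of $\mathbb{C}$ (hence bounded away from $\infty$ in the spherical metric) the preimages $f^{-i}(E_i)$ are genuine closed subsets of $\mathbb{C}$, and the compactness argument proceeds without modification.
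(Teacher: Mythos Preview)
Your argument is correct and is the standard nested-compact-sets proof of this folklore lemma. The paper does not actually supply a proof of this statement; it simply quotes the lemma with a reference to \cite[Lemma~1]{Slow}, so there is nothing to compare against beyond noting that your proof is exactly the expected one.
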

This lemma enables us to give the following general recipe for constructing slow escaping points. Note that we will only apply \Cref{Finish} with $m_j=n_j=j$ for $j \in \mathbb{N}$.
\begin{theorem}\label{Finish} Let $f$ be a transcendental entire function and $D$ be an unbounded domain. Suppose there exists a sequence of distinct bounded open sets $\Sigma_n \subset D$ such that $\min \{ |z| : z \in \overline{\Sigma}_n \} \rightarrow \infty $ as $n \rightarrow \infty$ and, for each $n \in \mathbb{N}$,
\begin{equation}\label{EqCover1}
f(\overline{\Sigma}_n)\supset  \overline{\Sigma}_{n+1}.
\end{equation}
Further suppose that there exist increasing sequences $(n_j)$ and $(m_j)$ such that,
\begin{equation}\label{EqCover2}
f(\overline{\Sigma}_{n_j})\supset \overline{\Sigma}_{m_j},
\end{equation}
with $0 \leq m_j \leq n_j$, $j \in \mathbb{N}$, and $m_j \rightarrow \infty$ as $j \rightarrow \infty$. Then, given any positive sequence $(a_n)$ such that $a_n \rightarrow \infty$ as $n \rightarrow \infty$, there exists
\[\zeta \in I(f) \cap J(f) \cap \overline{D} ~and~ N \in \mathbb{N},\]
such that
\[f^n ( \zeta) \in \overline{D}, ~\text{for}~ n\geq 1,\]
and
\[|f^n(\zeta)| \leq a_n, ~\text{for}~ n \geq N.\]
\end{theorem}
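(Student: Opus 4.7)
The plan is to apply \Cref{TopLemma} to a carefully chosen nested sequence of compacta of the form $E_n = \overline{\Sigma}_{k(n)}$, where $(k(n))$ is an index sequence that advances through the $\Sigma_j$'s slowly enough that the maximum modulus on $E_n$ stays below $a_n$, yet reaches arbitrarily large indices. First I would reduce to the case where $(a_n)$ is nondecreasing by replacing it with $\tilde a_n := \inf_{m \ge n} a_m$; this loses nothing, since $\tilde a_n \le a_n$ and $\tilde a_n \to \infty$. In the intended regime $m_j = n_j = j$ highlighted before the theorem, hypothesis \eqref{EqCover2} becomes $f(\overline{\Sigma}_j) \supset \overline{\Sigma}_j$, so at each index $j$ one may either \emph{advance} via \eqref{EqCover1} or \emph{stay} via \eqref{EqCover2}.

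Writing $R_k := \max\{|z| : z \in \overline{\Sigma}_k\}$ (finite because each $\Sigma_k$ is bounded), set $k(0) = 0$ and define recursively $k(n+1) = k(n)+1$ if $R_{k(n)+1} \le a_{n+1}$, and $k(n+1) = k(n)$ otherwise. Then $f(\overline{\Sigma}_{k(n)}) \supset \overline{\Sigma}_{k(n+1)}$ holds by construction, so \Cref{TopLemma} supplies $\zeta$ with $f^n(\zeta) \in \overline{\Sigma}_{k(n)}$ for all $n \ge 0$. Since $a_n \to \infty$, the advance rule must be triggered infinitely often, so $k(n) \to \infty$; combined with the hypothesis $\min\{|z| : z \in \overline{\Sigma}_k\} \to \infty$ this yields $|f^n(\zeta)| \to \infty$, giving $\zeta \in I(f)$, while $f^n(\zeta) \in \overline{\Sigma}_{k(n)} \subset \overline{D}$ is immediate. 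A short induction, which uses the monotonicity of $(\tilde a_n)$ to handle the ``stay'' branch, then shows $|f^n(\zeta)| \le R_{k(n)} \le a_n$ for every $n$ past the first advance step, and any such $n$ can serve as the required $N$.

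The delicate point is the conclusion $\zeta \in J(f)$. \Cref{TopLemma} alone only yields a point of $I(f) \cap \overline{D}$, which in principle could lie inside a Baker or wandering Fatou component. I would address this through the Eremenko characterization $\partial I(f) = J(f)$: by refining the construction --- for instance by replacing each $\overline{\Sigma}_{k(n)}$ with a small compact subset still realizing the required covering relations, or by exploiting the uncountably many branching choices encoded in the sequence of advance/stay decisions --- the compact set $S := \bigcap_{n \ge 0} f^{-n}(E_n) \cap E_0$ can be arranged to have empty interior, so every $\zeta \in S$ is a boundary point of $I(f)$ and hence lies in $J(f)$. Making this refinement genuinely compatible with \eqref{EqCover1} and \eqref{EqCover2} is the main technical hurdle; once it is in place, the membership $\zeta \in I(f) \cap J(f) \cap \overline{D}$ follows.
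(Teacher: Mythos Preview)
Your advance/stay construction for the itinerary $(k(n))$ is correct (in the case $m_j=n_j=j$, which is indeed the only one used) and is a tidy alternative to the paper's block--repetition scheme; both produce a sequence $E_n=\overline{\Sigma}_{k(n)}$ with $f(E_n)\supset E_{n+1}$, $k(n)\to\infty$, and $R_{k(n)}\le a_n$ for large $n$, so the $I(f)\cap\overline D$ part and the bound $|f^n(\zeta)|\le a_n$ go through.

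The genuine gap is in your argument for $\zeta\in J(f)$. Arranging $S=\bigcap_{n\ge 0} f^{-n}(E_n)\cap E_0$ to have empty interior does \emph{not} force $S\subset\partial I(f)$: a point $\zeta\in S\subset I(f)$ can perfectly well be an interior point of $I(f)$ even if $S$ itself is nowhere dense, since there may be many other escaping points near $\zeta$ that are not in $S$ (think of $\zeta$ lying in a Baker domain or an escaping wandering domain). So the implication ``$S$ has empty interior $\Rightarrow$ $\zeta\in\partial I(f)=J(f)$'' fails, and neither shrinking the $E_n$ nor invoking the uncountable branching of advance/stay choices repairs this.

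The paper closes this gap differently, and the key idea is already present in your setup: the ``stay'' relation $f(\overline{\Sigma}_j)\supset\overline{\Sigma}_j$ produces, via \Cref{TopLemma}, a point of $\overline{\Sigma}_j$ with \emph{bounded} orbit, while your construction produces a point of the same $\overline{\Sigma}_j$ that escapes. If $\overline{\Sigma}_j$ were contained in $F(f)$ then, by normality on the Fatou component containing it, either every orbit from $\overline{\Sigma}_j$ escapes or none does---a contradiction. Hence each $E_n$ meets $J(f)$. Complete invariance of $J(f)$ then gives $f(E_n\cap J(f))\supset E_{n+1}\cap J(f)$, and a second application of \Cref{TopLemma} to the compacta $E_n\cap J(f)$ yields $\zeta\in J(f)$ with all the previously established properties. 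Replacing your last paragraph by this argument completes the proof.
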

\begin{proof}
We will choose a new sequence of open sets from $(\Sigma_n)$ where we repeat some blocks of the sequence of sets $ \Sigma_n$ suitably often in order to hold up the rate of escape of an orbit that passes through each of these sets in turn. The $j$th block is illustrated in the following figure, where the cycle $ \Sigma_{n_j}, \Sigma_{m_j}, \ldots, \Sigma_{n_j-1}$ is repeated $q_j$ times.

\begin{center} 
\begin{tikzpicture}[scale=0.95] 
\draw (0,0) circle (.75cm);
\draw (2.5,0) circle (.75cm);
\draw (6.5,0) circle (.75cm);
\draw (9,0) circle (.75cm);
\draw[->] (-1.5,0) -- (-1,0);
\draw[->] (1,0) -- (1.5,0);
\draw[->] (3.5,0) -- (4,0);
\draw[->] (5,0) -- (5.5,0);
\draw[->] (7.5,0) -- (8,0);
\draw[->] (10,0) -- (10.5,0);
\node[above] at (1.25,0) {$f$};
\node[above] at (-1.25,0) {$f$};
\node[above] at (3.75,0) {$f$};
\node[above] at (5.25,0) {$f$};
\node[above] at (7.75,0) {$f$};
\node[above] at (4.5,1.95) {$f$};
\node[above] at (10.25,0) {$f$};
\node at (4.5,0) {\dots};
\node at (0,0) {$\Sigma_{m_j}$};
\node at (2.5,0) {$\Sigma_{m_j+1}$};
\node at (9,0) {$\Sigma_{n_j}$};
\node at (6.5,0) {$\Sigma_{n_j-1}$};
\draw[->] (8.5,.75) to[bend right] (.5,.75);
\draw (-1,-.5)--(-1,-1)--(10,-1)--(10,-.5);
\draw (4.5,-1)--(4.5,-1.25);
\node[below] at (4.7,-1.25) {$p_j$ sets};
\label{SlowFigure}
\end{tikzpicture}
\end{center}

Now, consider the sequence $(Q_j)$ with $Q_0=0$ and $Q_j=q_1p_1+\ldots + q_jp_j$, for $j \geq 1$. Here $p_j=n_j-m_j+1$ is the length of the $j$th repeated block we will use and $q_j$, the number of repeats of the $j$th block, will be chosen to give a desired rate of escape.
Next, define
\begin{equation}\label{EqCover3}
E_k=
  \begin{cases}
  \overline{\Sigma}_k & \text{for}~ 0 \leq k \leq n_0,\\
\overline{\Sigma}_{k-Q_{j-1}}& \text{for}~ n_{j-1}+Q_{j-1} \leq k \leq n_j +Q_{j-1},~j\geq 1,\\[2.5ex]

         \overline{\Sigma}_{m_j+i} &    \begin{aligned}
       & \text{for}~n_j + Q_{j-1} < k < n_j +Q_j,\\
       &  \text{and}~ i\equiv k-(n_j + Q_{j-1}+1)\;(\bmod\; {p_j}), ~ 0 \leq i < p_j.
    \end{aligned}
  \end{cases}
\end{equation}
Therefore, by (\ref{EqCover1}) and (\ref{EqCover2}), $f(E_k) \supset E_{k+1}$ for $k \geq 0$. So, by \Cref{TopLemma} there exists ${\zeta} \in E_0$ such that 
\begin{equation}\label{EqCover4}
{f^k(\zeta) \in E_k}, ~\text{for}~ k \in \mathbb{N},
\end{equation}
 and hence
\[\zeta \in I(f) \cap \overline{D} ~\text{and}~ f^n(\zeta) \in \overline {D} ~\text{for} ~ n \in \mathbb{N}.\]

Now, note that we can assume $(a_n)$ is an increasing sequence. Choose a sequence $N_{j} \rightarrow \infty$ such that $\max \{ |z| : z \in \overline{\Sigma}_{n}, n \leq n_j \} \leq a_{N_{j}}$, for $j \in \mathbb{N}$. Further, choose $(q_j)$ such that $n_{j-1}+Q_{j-1} \geq N_{j}$ for $j$ sufficiently large. 
Then, by (\ref{EqCover3}) and (\ref{EqCover4}),  \[|f^k(\zeta)| \leq a_{N_{j}}\leq a_{n_{j-1}+Q_{j-1}} \leq a_k,\]  for $n_{j-1} + Q_{j-1} \leq k < n_j + Q_j$,  and $j$ sufficiently large.

Finally, we show that we can ensure that $\zeta \in J(f)$.
Suppose that ${E}_n \subset F(f)$ for some $n \in \mathbb{N}$. Then ${E}_n \subset I(f)$ by normality, since there exists a point $\zeta \in {E}_n \cap I(f)$. However, by using a repeated block and \Cref{TopLemma}, there also exists a point whose orbit remains bounded, which gives  a contradiction. Hence ${E}_n$ meets $J(f)$ for all~$n$. Further, since $J(f)$ is completely invariant,
\[f\left({E}_n \cap J(f)\right) \supset  {E}_{n+1} \cap J(f), ~\text{for}~ n \in \mathbb{N}.\]
Hence, by \Cref{TopLemma} we can choose a point $\zeta \in I(f) \cap J(f) \cap \overline{D}$ for which $f^n(\zeta) \in \overline{D}$, for $n \geq 1$, and  $|f^n(\zeta)| \leq a_n$, for $n$ sufficiently large.
\end{proof}
\section{Slow escape in logarithmic tracts}\label{LogTractsSection}
In this section, we first prove an annulus covering result based on a derivative estimate due to Eremenko and Lyubich \cite{EL92} in a logarithmic tract. This then allows us to prove our result on slow escaping points in a logarithmic tract, \Cref{MySlow1}, and our two-sided slow escape result, \Cref{MySlow2}, by constructing a sequence of compact sets and applying \Cref{Finish}. 

Let $D$ be a logarithmic tract of $f$ and suppose that $f(D)= \mathbb{C}\setminus \overline{\mathbb{D}}$ with $f(0) \in \mathbb{D}$. We consider a logarithmic transform $F$ of $f$ defined by the following commutative diagram,

\[{\begin{tikzcd}
\log D \arrow{r}{F} \arrow[swap]{d}{e^t} & H \arrow{d}{e^t} \\%
z \arrow{r}{f}& w
\end{tikzcd}}
\]
where $\exp(F(t))=f(\exp(t))$ for $t \in \log D$,  $F$ is a conformal isomorphism on each component of $\log D$, and $H= \{z :\operatorname{Re}(z)>0 \}$. Eremenko and Lyubich \cite{EL92} used this logarithmic transform in order to get the following useful expansion estimate.
\begin{lemma}\label{Expansion}
For $z \in D$ as above, we have
\[\left|\frac{ z f'(z)}{f(z)}\right| \geq \frac{1}{4 \pi} \log \left|f(z)\right|.\]
\end{lemma}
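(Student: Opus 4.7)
The plan is to prove this expansion estimate by working on the logarithmic transform side and converting an upper bound on the Euclidean distance from a point of $\log D$ to the boundary of its component into the desired lower bound on $|F'|$ via hyperbolic metric comparison.

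First I would translate the claim. Writing $t = \log z$ in the component $T$ of $\log D$ containing it, differentiating the defining relation $\exp F(t) = f(\exp t)$ gives $F'(t) = z f'(z)/f(z)$, and $\operatorname{Re} F(t) = \log|f(z)|$. So the inequality to prove becomes
\[
|F'(t)| \geq \frac{1}{4\pi}\,\operatorname{Re} F(t)
\]
for every $t$ in every component $T$ of $\log D$, where $F:T \to H$ is a conformal isomorphism onto the right half-plane.

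Next I would bound the Euclidean distance from $t$ to $\partial T$. The set $\log D = \{s : e^s \in D\}$ is invariant under the translation $s \mapsto s + 2\pi i$, and its components are pairwise disjoint. I claim any two \emph{distinct} components are separated by this translation: indeed, for a logarithmic tract, exponentiation identifies all sheets of $\log D$ above $D$, and a single component $T$ cannot contain both $t$ and $t+2\pi i$ since $F$ restricted to $T$ is a bijection onto $H$ while $e^t = e^{t+2\pi i}$ would force $F(t) = F(t+2\pi i)$. Hence the segment from $t$ to $t + 2\pi i$ must cross $\partial T$, giving $\operatorname{dist}(t, \partial T) \leq 2\pi$.

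Finally I would run the hyperbolic comparison. Since $F: T \to H$ is a conformal isomorphism, $\rho_T(t) = \rho_H(F(t))\,|F'(t)|$, and $\rho_H(w) = 1/\operatorname{Re}(w)$ in the standard normalization. On the other hand, $T$ is simply connected (as it is conformally equivalent to $H$), so the Koebe one-quarter theorem applied to a Riemann map $\mathbb{D} \to T$ sending $0 \to t$ yields the standard lower bound
\[
\rho_T(t) \geq \frac{1}{2\operatorname{dist}(t, \partial T)} \geq \frac{1}{4\pi}.
\]
Rearranging $|F'(t)| = \rho_T(t)\operatorname{Re} F(t)$ gives the desired inequality, which translates back via the formulas for $F'(t)$ and $\operatorname{Re} F(t)$ into the statement of the lemma.

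The main obstacle is the geometric step, namely justifying $\operatorname{dist}(t, \partial T) \leq 2\pi$; the rest is essentially a bookkeeping exercise combining the formula for the hyperbolic density of a half-plane with the Koebe lower bound. All tracking of multiplicative constants must be done carefully so that the $1/(2\pi)$ from the distance estimate and the $1/2$ from the Koebe inequality combine to give exactly the advertised $1/(4\pi)$.
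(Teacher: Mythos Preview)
The paper does not actually prove this lemma; it simply quotes it from Eremenko and Lyubich \cite{EL92}. Your argument is essentially the classical Eremenko--Lyubich proof: pass to the logarithmic transform, use that each component $T$ of $\log D$ has vertical width at most $2\pi$, and combine the Koebe lower bound $\rho_T(t)\ge 1/(2\operatorname{dist}(t,\partial T))$ with $\rho_H(w)=1/\operatorname{Re}w$ and conformal invariance. The bookkeeping of constants is correct.

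There is one small gap. Your justification that $t$ and $t+2\pi i$ cannot lie in the same component $T$ does not go through as written: from $e^{t}=e^{t+2\pi i}$ and the defining relation $\exp\circ F=f\circ\exp$ you only obtain $F(t+2\pi i)-F(t)\in 2\pi i\mathbb{Z}$, not $F(t+2\pi i)=F(t)$, so no contradiction with injectivity of $F|_T$ follows directly. The clean way to finish is to note that a logarithmic tract $D$ is simply connected (it is a universal cover of $\mathbb{C}\setminus\overline{\mathbb{D}}$) and avoids $0$, so $\exp:T\to D$ is a biholomorphism on each component $T$ of $\exp^{-1}(D)$; hence the $2\pi i$-translates of $T$ are pairwise disjoint, and the segment from $t$ to $t+2\pi i$ must meet $\partial T$. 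With this correction your proof is complete and matches the standard argument.
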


We now use this estimate in order to calculate the length of the image of sections of level curves of $f$ and so obtain an annulus covering result. We denote the open annulus by $A(r,R) = \{z : r < |z| < R\}$.
\begin{lemma}\label{BCover}
Let $D$ be a logarithmic tract as above, $c>1$, and $r_0$ be sufficiently large that $M_{D}(r_0)>\exp(\frac{8 \pi^2 c}{c-1})$. If $\Sigma=A(r_0,cr_0) \cap D$, then
\[f(\Sigma) \supset \bar{A}\left(\exp\left(\frac{8 \pi^2 c}{c-1}\right), M_{D}(r_0)\right).\] 
\end{lemma}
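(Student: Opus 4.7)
My plan is to show that for every $\rho$ in the closed interval $I := [\exp(8\pi^2 c/(c-1)), M_{D}(r_0)]$, the entire circle $\{|w|=\rho\}$ is contained in $f(\Sigma)$; this will immediately give the claimed inclusion $f(\Sigma) \supset \bar{A}(\exp(8\pi^2 c/(c-1)), M_D(r_0))$. The strategy for a fixed $\rho \in I$ is to produce an arc $\gamma \subset \overline{\Sigma}$ with $|f| \equiv \rho$ on $\gamma$, having one endpoint on $\{|z|=r_0\}$ and the other on $\{|z|=cr_0\}$, and then to apply \Cref{Expansion} to conclude that the total variation of $\arg f$ along $\gamma$ is at least $2\pi$, forcing $f(\gamma)$ to cover the full circle $\{|w|=\rho\}$.

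The arc $\gamma$ will be extracted from the level set $L_\rho := \{z \in D : |f(z)|=\rho\}$. Since $D$ is a logarithmic tract, it is simply connected and $f|_D$ is a universal covering of $\{|w|>1\}$, so $\tilde{f} := \log f$ lifts to a conformal equivalence $D \to H = \{\operatorname{Re} > 0\}$, under which $L_\rho$ corresponds to the single vertical line $\{\operatorname{Re} = \log \rho\}$; in particular $L_\rho$ is a single smooth unbounded curve in $D$. Since $|f| \equiv 1$ on $\partial D$, attains $M_D(r_0) \geq \rho$ on $D \cap \{|z|=r_0\}$, and exceeds $\rho$ on $D \cap \{|z|=cr_0\}$ (using the strict monotonicity of $M_D$), the intermediate value theorem shows $L_\rho$ meets both circles. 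Parametrizing $L_\rho$ by the imaginary coordinate of its preimage in $H$ and selecting the window between a crossing of $\{|z|=r_0\}$ and the first subsequent crossing of $\{|z|=cr_0\}$, one obtains an arc $\gamma \subset \overline{\Sigma}$ with the required endpoint behaviour.

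For the angular estimate, the constancy of $|f|$ on $\gamma$ gives $|d\arg f| = |f'(z)/f(z)|\,|dz|$. Applying \Cref{Expansion} together with $|z|\leq cr_0$ on $\overline{\Sigma}$ yields $|f'/f| \geq \log \rho/(4\pi c r_0)$, while the fact that $\gamma$ joins $\{|z|=r_0\}$ to $\{|z|=cr_0\}$ gives the length bound $\int_\gamma |dz| \geq (c-1)r_0$. Combining,
\[
\int_\gamma |d\arg f| \;\geq\; \frac{\log \rho}{4\pi c r_0}\cdot (c-1)r_0 \;=\; \frac{(c-1)\log \rho}{4\pi c} \;\geq\; 2\pi
\]
by the hypothesis $\log \rho \geq 8\pi^2 c/(c-1)$. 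Hence $f(\gamma)$ is a continuous curve on $\{|w|=\rho\}$ with angular variation at least $2\pi$, which forces $f(\gamma) = \{|w|=\rho\}$. The main obstacle, I expect, lies in the second paragraph: cleanly extracting the subarc of $L_\rho$ inside $\overline{\Sigma}$ whose two endpoints lie on the two bounding circles. Once that topological step is in place, the derivative estimate and the resulting covering conclusion drop out from a short direct computation.
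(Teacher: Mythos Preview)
Your argument is correct and is essentially the same as the paper's: pick a level-curve arc of $|f|=\rho$ crossing $\overline{\Sigma}$ from $\{|z|=r_0\}$ to $\{|z|=cr_0\}$, then use the Eremenko--Lyubich expansion estimate together with the length bound $(c-1)r_0$ to force the image to cover the full circle $\{|w|=\rho\}$. The only cosmetic difference is that you bound the total variation of $\arg f$ by $2\pi$ while the paper bounds the arc length of $f(\gamma)$ by $2\pi\rho$ --- the same inequality up to the factor $\rho$ --- and you give more justification than the paper for the topological step of extracting the arc, which the paper simply asserts.
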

\begin{proof}
Consider a level curve, that is a connected component of $|f(z)|=R$, and choose a segment $\sigma= \sigma(R)$ of the level curve such that ${\sigma \subset \Sigma}$, and $\sigma$ meets both $\{z:|z|=r_0\}$ and  $\{z:|z|=c r_0\}$. We will have level curves that fulfill this for all $R \in[1, M_D(r_0)]$. Further, denote by $l(\sigma)$ the length of the curve $\sigma$ and consider the image of $\sigma$ under $f$. Then, by \Cref{Expansion},
\allowdisplaybreaks
\begin{align*}
l(f(\sigma))&= \int_\sigma |f'(z)| \, |dz| \\
& \geq \int_\sigma \frac{1}{4 \pi}{\left|\frac{f(z)}{z}\right| \log|f(z)|} \,|dz| \\
& \geq\int_\sigma \frac{1}{4 \pi}{\frac{R}{c r_0} \log R}\, |dz| \\
&=\frac{1}{4 \pi}{\frac{R}{c r_0} l(\sigma) \log R}  \\
& \geq \frac{1}{4 \pi}{\frac{R}{c r_0} (c r_0-r_0) \log R} .
\end{align*}
Since $f$ has no critical points on $\sigma$, we deduce that $f(\sigma)$ covers a circle of radius $R$ provided that $\frac{1}{4 \pi}{\frac{R}{c r_0} } (c r_0-r_0)\log R \geq 2 \pi R$. This holds if we take  $R \geq \exp(\frac{8 \pi^2 c}{c-1})$. Therefore, \[f(\Sigma) \supset \bar{A}\left(\exp\left(\frac{8 \pi^2 c}{c-1}\right), M_{D}(r_0)\right),\]
as required.
\end{proof}

We are now ready to prove our slow escaping result within a logarithmic tract of a transcendental entire function, \Cref{MySlow1}. We will use a version of \Cref{BCover} with $c=2$ in order to construct a sequence of annuli intersected with our tract and then apply \Cref{Finish} in order to obtain an orbit that escapes suitably slowly.
\begin{proof}[Proof of \Cref{MySlow1}]
Take  $r_0> e^{16 \pi^2}$ sufficiently large that $M_D(r) \geq 4r$ for ${r \geq r_0}$. Then we can apply \Cref{BCover} with $c=2$ to $\Sigma_0=A(r_0,2r_0) \cap D$  to deduce that there exists $r_1 \geq 2 r_0$ such that
\[f(\Sigma_0) \supset  \overline{A}(e^{16 \pi^2},M_D(|r_0|)) \supset \overline{A(r_1,2 r_1) \cap D}= \overline{\Sigma}_1. \]
Further, 
\[ f(\Sigma_0) \supset  \overline{A(r_0, 2 r_0) \cap D}= \overline{\Sigma}_0.\]

Repeating this process we obtain a sequence $r_n \rightarrow \infty$ and a sequence of open sets, $\Sigma_n$, such that
\[f( \overline{\Sigma}_n) \supset \overline{\Sigma}_n \cup \overline{\Sigma}_{n+1}, ~~\text{for $n \geq 0$}.\]
Applying \Cref{Finish}, we obtain the desired result.
\end{proof}

We can also use \Cref{BCover} to prove \Cref{MySlow2}, our two-sided slow escape result. To accomplish this, we use the following consequence of a convexity property of $\log M_D(r)$.
\begin{lemma} \label{M_D^c}
 Let $D$ be a direct tract. Then there exists $R>0$ such that, for all $r>R$ and all $c>1$,
\[M_D(r^c) \geq M_D(r)^c,\]
and thus, for $C>1$,
\[\lim_{r\rightarrow \infty} \frac{M_D(Cr)}{M_D(r)} = \infty.\]
\end{lemma}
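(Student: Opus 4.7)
Set $\phi(t) = \log M_D(e^t)$. The \emph{convexity property} referred to in the statement is that $\phi$ is convex for $t$ sufficiently large; my plan is to take this as the starting point. To establish it, I would extend $\log|f|$ to a subharmonic function $u$ on $\mathbb{C}$ by setting $u \equiv \log R_0$ on $\mathbb{C} \setminus \overline{D}$, where $R_0$ is the constant value of $|f|$ on $\partial D$. Subharmonicity is immediate away from $\partial D$ and holds on $\partial D$ because $u(z_0) = \log R_0$ while $u \geq \log R_0$ everywhere. Hadamard's three-circles theorem applied to $u$ then shows that $\max_{|z|=r} u(z)$ is convex in $\log r$, and for all $r$ with $\{|z|=r\} \cap D \neq \emptyset$ this maximum equals $\log M_D(r)$. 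I also use the classical fact that for a direct tract of a transcendental entire function $\phi(t)/t \to \infty$, which follows via Phragm\'en--Lindel\"of (applied to $f/z^n$ on $D$ to rule out polynomial growth).

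For the first assertion, I would fix $c > 1$ and choose $T_0$ large enough that $\phi$ is convex on $[T_0, \infty)$. For $t \geq T_0$, write $t$ as the convex combination $t = \lambda T_0 + (1-\lambda)(ct)$ with $\lambda = (c-1)t/(ct - T_0) \in (0,1)$, and apply the convexity inequality $\phi(t) \leq \lambda \phi(T_0) + (1-\lambda)\phi(ct)$. A short algebraic simplification reduces this to the equivalence
\[
\phi(ct) \geq c\,\phi(t) \iff \frac{\phi(t)}{t} \geq \frac{\phi(T_0)}{T_0}.
\]
Since $\phi(t)/t \to \infty$, the right-hand inequality holds for all $t$ beyond some $T_1 \geq T_0$. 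Taking $R = e^{T_1}$ then gives $M_D(r^c) \geq M_D(r)^c$ for every $r \geq R$ and every $c > 1$.

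For the limit statement, given $C > 1$ and $r$ large, I would specialize to $c = 1 + (\log C)/(\log r) > 1$, chosen so that $r^c = Cr$. The first assertion then yields
\[
\frac{M_D(Cr)}{M_D(r)} \geq M_D(r)^{\,c-1} = \exp\!\left(\log C \cdot \frac{\log M_D(r)}{\log r}\right),
\]
which tends to $\infty$ as $r \to \infty$ by the super-polynomial growth $\log M_D(r)/\log r \to \infty$.

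The main obstacle is not the manipulation but the two background facts invoked: convexity of $\phi$ (via subharmonic extension and three-circles) and super-polynomial growth of $M_D$ on a direct tract (via Phragm\'en--Lindel\"of). Both are classical but nontrivial; once they are cited or established, the lemma reduces to the brief convex-combination computation above.
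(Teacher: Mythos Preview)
Your proof is correct and follows essentially the same approach as the paper, which does not give a detailed argument but simply notes that the result follows from the convexity of $\log M_D(r)$ in $\log r$ together with $\log M_D(r)/\log r \to \infty$ (citing \cite[Lemma~2.2]{SmallGrowth} and \cite[Theorem~2.1]{Tracts}). You have supplied precisely the details behind that sketch: the subharmonic extension to obtain convexity via three circles, the convex-combination computation reducing $\phi(ct)\geq c\phi(t)$ to the slope condition $\phi(t)/t\geq \phi(T_0)/T_0$, and the substitution $c=1+\log C/\log r$ for the limit; the only cosmetic point is that your ``equivalence'' is really the equivalence between the slope condition and the sufficiency of the convexity lower bound, but the implication you need is exactly the one you derive.
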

The proof is similar to that of \cite[Lemma 2.2]{SmallGrowth} using the convexity of $\log M_D(r)$ with respect to $\log r$ and that $\frac{\log M_D(r)}{\log r} \rightarrow \infty$ as $r \rightarrow \infty$ \cite[Theorem 2.1]{Tracts}.

\begin{proof}[Proof of \Cref{MySlow2}]
Let $C>1$ and let $(a_n)$ be the given positive sequence which satisfies $a_n \rightarrow \infty$ as $n \rightarrow \infty$ and 
\begin{equation}\label{O(Ma_n)}
a_{n+1} \leq K M_D(a_n), ~\text{for} ~ n \geq 0,
\end{equation}
 and for some constant $K>0$. Take $c \in (1,C)$ and choose $N \in \mathbb{N}$ so large that
\begin{equation} \label{N}
a_n> \exp({ \frac{8 \pi^2 c}{c -1}}), ~ M_D(\frac{C}{c} a_n)> Ca_n, ~ \text{and} ~ \frac{M_D(C a_n/c)}{M_D(a_n)} >CK,~ \text{for}~ n \geq N.
\end{equation}
This is possible since $\frac{M_D(r)}{r}\rightarrow \infty$, and $\frac{M_D(Cr/c)}{M_D(r)} \rightarrow \infty$ as $r \rightarrow \infty$ by \Cref{M_D^c}. These conditions will allow us to apply \Cref{BCover}.

 Define ${\Sigma_0=A\left(\frac{C}{c}a_N, C a_N\right) \cap D}$. Note we may assume that $A(a_n, C a_n) \cap D\neq \emptyset$ for $ n \geq N$. Applying \Cref{BCover} with $r_0=\frac{C}{c} a_N$, we obtain, by \eqref{N},
\begin{align}\label{FirstCover}
f(\Sigma_0) \supset  \overline{A}\left(\exp\left({ \frac{8 \pi^2c}{c -1}}\right),M_D\left(\frac{C}{c}a_N\right)\right) \supset \overline{A\left(a_N,C a_N\right) \cap D} \supset \overline{\Sigma}_1, 
\end{align}
where $\Sigma_1=A\left(\frac{C}{c}a_N,C a_N\right) \cap D= \Sigma_0$. Let $\Sigma_n=\Sigma_0$ for $n=2,\ldots, N,$ so that $f(\Sigma_n) \supset {\overline\Sigma}_{n+1}$ for $n < N$, as in \eqref{FirstCover}.
By \Cref{BCover}, \eqref{FirstCover}, and \eqref{O(Ma_n)},
\begin{align*}
 f(\Sigma_N) \supset  \overline{A}\left(\exp\left({ \frac{8 \pi^2c}{c -1}}\right),M_D\left(\frac{C}{c}a_N\right)\right)\supset \overline{A(a_{N+1},C a_{N+1}) \cap D} \supset \overline{\Sigma}_{N+1},
\end{align*}
where $\Sigma_{N+1}=A\left(\frac{C}{c}a_{N+1},C a_{N+1}\right) \cap D$.

We now apply this argument repeatedly for all $n \geq N$ to obtain a sequence of sets $\Sigma_n= A(\frac{C}{c} a_n, C a_n) \cap D$, for $n \geq N$, such that
\begin{equation*}
f(\overline{\Sigma}_n) \supset \overline{A(a_{n+1},C a_{n+1})} \supset \overline{\Sigma}_{n+1}, ~\text{for} ~ n \geq 0.
\end{equation*}
By \Cref{TopLemma} there exists a point $ \zeta \in \overline{\Sigma}_0$ such that
\[f^n(\zeta) \in \overline{\Sigma}_n, ~\text{for}~ n\geq0. \]
Therefore, there exists a point $\zeta \in\overline{D}$ such that  $a_{n} \leq |f^n(\zeta)| \leq Ca_{n}$, for all $n\geq N$.

Next, we show that we can also choose $\zeta \in J(f)$. Since $f$ is bounded on a curve going to $\infty$, $f$ has no unbounded, multiply connected Fatou components (by \cite{Baker75}), so  all the components of $J(f)$ are unbounded (see, for example, \cite[Theorem 1]{Kisaka}). The image of $D$ contains an unbounded connected set in $J(f)$ and so, by complete invariance, $J(f)$ will meet any annulus with sufficiently large radius intersected with $D$. Therefore, $\overline{\Sigma}_n$ meets $J(f)$ for all $n$ sufficiently large and so we can choose $\zeta \in\overline{D} \cap J(f)$ such that  $a_{n} \leq |f^n(\zeta)| \leq Ca_{n}$, for all $n$ sufficiently large.

Finally, we can infer \Cref{MySlow2} by modifying the above proof slightly, choosing $\Sigma_n= A(\frac{C_j}{c_j} a_n, C_j a_n) \cap D$ for ${N_{j-1} < a_n \leq N_j}$, where $C_j \rightarrow 1$ as $N_j \rightarrow \infty$, and $c_j \in (1,C_j), $ for $ j\geq 1$.

\end{proof}

\section{Slow escape in more general tracts}\label{HarmSec}
In \Cref{LogTractsSection} we showed that we can obtain points that escape arbitrarily slowly in a logarithmic tract. We can construct points that escape arbitrarily slowly within a more general direct tract, provided that the boundary of the tract is sufficiently well behaved. First, we prove an annulus covering result based on the hyperbolic metric and then we use this to obtain another annulus covering lemma giving conditions on the harmonic measure and function value. Finally, we apply this covering by estimating some function values of points in the tract compared to the hyperbolic distance between them.

The proof of this first lemma uses the contraction property of the hyperbolic metric. We denote the hyperbolic density at a point $z$ in a domain, or more generally a hyperbolic Riemann surface, $\Sigma$ by $\rho_\Sigma(z)$ and the hyperbolic distance between two points $z_1$ and $z_2$ on $\Sigma$ by $\rho_\Sigma(z_1, z_2)$. This is a more general version of a theorem of Bergweiler, Rippon, and Stallard \cite[Theorem 3.3]{MCWD}.
\begin{lemma}\label{hypcover}
Let $\Sigma$ be a hyperbolic Riemann surface. For a given $K>1$, if $f: \Sigma \rightarrow \mathbb{C} \setminus \{0\}$ is holomorphic, then for all $z_1, z_2 \in \Sigma$ such that
\[\rho_\Sigma (z_1, z_2) < \frac{1}{2} \log \left( 1+ \frac{\log K}{10\pi}\right) ~~~\text{and}~~~ |f(z_2)| \geq K|f(z_1)| \]
we have
\[ f(\Sigma) \supset \bar A(|f(z_1)|,|f(z_2)|).\]
\end{lemma}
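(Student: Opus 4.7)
The plan is to argue by contradiction. Suppose that $f(\Sigma) \not\supset \bar A(|f(z_1)|,|f(z_2)|)$, and pick a point $w_0$ in this closed annulus that is not in $f(\Sigma)$. Then $f$ maps $\Sigma$ into the doubly punctured plane $\Omega_{w_0} := \mathbb{C} \setminus \{0, w_0\}$, which is a thrice-punctured sphere (with the third puncture at $\infty$) and in particular a hyperbolic Riemann surface. Since $f \colon \Sigma \to \Omega_{w_0}$ is holomorphic, the contraction property of the hyperbolic metric yields
\[
\rho_{\Omega_{w_0}}(f(z_1), f(z_2)) \;\leq\; \rho_\Sigma(z_1,z_2).
\]
Normalizing by the conformal isometry $w \mapsto w/w_0$, we may assume $w_0 = 1$, and set $\alpha := f(z_1)/w_0$, $\beta := f(z_2)/w_0$, so that $|\alpha| \leq 1 \leq |\beta|$ and $|\beta|/|\alpha| \geq K$.

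The next step is to bound $\rho_{\mathbb{C}\setminus\{0,1\}}(\alpha,\beta)$ from below, using a Hempel--Ahlfors type lower estimate for the density $\lambda$ of the thrice-punctured sphere of the form
\[
\lambda_{\mathbb{C}\setminus\{0,1\}}(w) \;\geq\; \frac{1}{2|w|\bigl(|\log|w|| + 10\pi\bigr)}.
\]
Along any path $\gamma$ in $\mathbb{C}\setminus\{0,1\}$ from $\alpha$ to $\beta$, I will set $u := \log|w|$ and use $|dw|/|w| \geq |du|$ to reduce the hyperbolic length to a one-variable integral. Since the resulting integrand depends only on $u$, the triangle inequality gives
\[
\int_\gamma \lambda\,|dw| \;\geq\; \int_\gamma \frac{|du|}{2(|u|+10\pi)} \;\geq\; \left|\int_\gamma \frac{du}{2(|u|+10\pi)}\right| \;=\; \tfrac{1}{2}\log\frac{(10\pi-u_1)(10\pi+u_2)}{(10\pi)^2},
\]
where $u_1 := \log|\alpha| \leq 0$ and $u_2 := \log|\beta| \geq 0$, and the final equality comes from the fundamental theorem of calculus applied to the odd antiderivative of $1/(2(|u|+10\pi))$. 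Using $u_2 - u_1 \geq \log K$ together with $-u_1, u_2 \geq 0$, one sees that $(10\pi-u_1)(10\pi+u_2) \geq (10\pi)^2 + 10\pi \log K$, hence
\[
\rho_{\mathbb{C}\setminus\{0,1\}}(\alpha,\beta) \;\geq\; \tfrac{1}{2}\log\!\left(1+\frac{\log K}{10\pi}\right).
\]
Combined with the Schwarz--Pick inequality above, this contradicts the hypothesis on $\rho_\Sigma(z_1,z_2)$, and the lemma follows.

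The main obstacle will be establishing the explicit lower bound on $\lambda_{\mathbb{C}\setminus\{0,1\}}$ with the particular constant $10\pi$, since this single estimate controls the numerics of the stated bound. I expect it to follow from a standard Ahlfors--Schwarz argument on the universal cover of $\mathbb{C}\setminus\{0,1\}$, or from Hempel's classical density estimates, formulated symmetrically in $\log|w|$ and $-\log|w|$ so that the monotone one-variable calculation above goes through cleanly. Once that density bound is in hand, the remaining steps---Schwarz--Pick contraction, the conformal normalization, and the elementary one-variable integration---are routine.
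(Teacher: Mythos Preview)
Your proposal is correct and follows essentially the same route as the paper: argue by contradiction, pick an omitted value $w_0$ in the annulus, apply Schwarz--Pick to land in $\mathbb{C}\setminus\{0,1\}$ after rescaling, invoke the Hempel-type density lower bound $\lambda_{\mathbb{C}\setminus\{0,1\}}(w)\geq 1/(2|w|(|\log|w||+10\pi))$ (which the paper cites from Hayman, \emph{Subharmonic Functions}, Theorem~9.13), and then integrate radially to obtain $\rho_{\mathbb{C}\setminus\{0,1\}}(\alpha,\beta)\geq \tfrac{1}{2}\log(1+\tfrac{\log K}{10\pi})$.

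The only notable difference is organisational: the paper first extracts a sub-arc $\gamma'$ of the geodesic whose endpoints have moduli in exact ratio $K$ and straddle $1$, and then integrates over that sub-arc, whereas you integrate over the full path from $\alpha$ to $\beta$ and use $u_1\leq 0\leq u_2$ directly (which follows immediately from $|f(z_1)|\leq |w_0|\leq |f(z_2)|$). Your version is marginally cleaner since it avoids the sub-arc selection, but the underlying computation---expanding $(10\pi-u_1)(10\pi+u_2)$ and noting that the cross term $-u_1u_2\geq 0$---is identical to the paper's. The density bound you flag as the ``main obstacle'' is indeed the one substantive external input, and the paper simply cites it rather than reproving it.
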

\begin{proof}
Suppose that $\rho_\Sigma (z_1, z_2) < \lambda$ and $|f(z_2)| \geq K|f(z_1)| $ for some value of $\lambda$ to be chosen. Suppose also for a contradiction that there exists some point ${w_0 \in  \bar A(|f(z_1)|,|f(z_2)|)\setminus{f(\Sigma)}}$. By Pick's Theorem \cite[Theorem I.4.1]{CarlesonGamelin},
\allowdisplaybreaks
\begin{align*}
\rho_\Sigma (z_1, z_2) & \geq \rho_{f(\Sigma)} (f(z_1),f(z_2)) \\
& \geq \rho_{\mathbb{C} \setminus \{0,w_0\}}(f(z_1),f(z_2)) \\
&=\rho_{\mathbb{C} \setminus \{0,1\}}(f(z_1)/w_0,f(z_2)/w_0).
\end{align*}
Let $\gamma$ be a hyperbolic geodesic in $\mathbb{C} \setminus \{0,1\}$ from $t_1=f(z_1)/w_0$ to $t_2=f(z_2)/w_0$. There exists a segment $\gamma '$ of $\gamma$ joining the point $t_1'$ to $t_2'$, where ${|t_2'|=K|t_1'|}$ and ${1 \in A(|t_1'|, |t_2'|)}$. This choice is possible since $|f(z_2)| \geq K|f(z_1)|$.
Hence we have, ${t_1',t_2' \in \bar A(1/K,K)}$.

From \cite[Theorem 9.13]{Hayman}, the density of the hyperbolic metric on $\mathbb{C}\setminus \{0,1\}$ is bounded below by $1/(2|w|(|\log|w|| + 10\pi))$. Hence,
\allowdisplaybreaks
\begin{align*}
\rho_\Sigma(z_1,z_2) &\geq \rho_{\mathbb{C} \setminus \{0,1\}}(t_1',t_2') \\
&= \int_{\gamma'} \rho_{\mathbb{C} \setminus \{0,1\}}(z) |dz| \\
&\geq \int_{t_1'}^{t_2'} \frac{|dz|}{2|z|(|\log|z|| + 10\pi)} \\
&\geq \int_{|t_1'|}^{|t_2'|} \frac{dr}{2r(|\log r|+10\pi)}\\
&=\int_{|t_1'|}^{1} \frac{dr}{2r(10\pi-\log r)}+\int_{1}^{K|t_1'|} \frac{dr}{2r(10\pi+\log r)}\\
&=-\frac{1}{2} \log\left(1-\frac{\log r}{10\pi}\right)\Big|_{|t_1'|}^{1} +\frac{1}{2} \log\left(1+\frac{\log r}{10\pi}\right)\Big|_{1}^{K|t_1'|}\\
&=\frac{1}{2} \log\left(\left(1-\frac{\log |t_1'|}{10\pi}\right)\left(1+\frac{\log K|t_1'|}{10\pi}\right) \right) \\
&=\frac{1}{2} \log\left( 1 + \frac{\log K}{10\pi}-\frac{\log |t_1'| \log K|t_1'|}{100\pi^2} \right)\\
&\geq \frac{1}{2} \log \left(1+\frac{\log K}{10\pi}\right), ~\mbox{since $1/K \leq |t_1'| \leq 1$. }
\end{align*}
So, if we set $\lambda = \frac{1}{2} \log \left(1+\frac{\log K}{10\pi}\right)$, then
we reach a contradiction to our initial assumption that $\rho_\Sigma (z_1, z_2) < \lambda$.
\end{proof}
Now, we apply \Cref{hypcover} to an annulus or bounded domain intersected with a tract for which we can continue applying \Cref{hypcover} to obtain a slow escape result. This leads us to impose a few extra conditions on the tracts we consider, which we now define.
\begin{definition}\label{BGRHM}
Let $D$ be a direct tract of a function $f$, where $|f(z)|=1$ on $\partial D$, for which there exists a sequence $\Sigma_n$ of quadrilaterals in $D$ tending to $\infty$ each of which contains a point $z_n$ such that  $|f(z_n)|> \max\{|z|: z \in \Sigma_{n+1}\}$ and the harmonic measure in $\Sigma_n$ at $z_n$ of some connected component of $\partial \Sigma_n \cap \partial D$, $\sigma_n$ say, is uniformly bounded from below by some positive value. Then, $D$ is said to have \textit{bounded geometry with respect to harmonic measure}.
\end{definition}
Using the assumptions of \Cref{BGRHM}, consider a general quadrilateral $\Sigma_n$, called $\Sigma$ for simplicity, with its associated point $z \in \Sigma$ and the associated set ${\sigma \subset \partial \Sigma \cap \partial D}$. Then there exists a hyperbolic geodesic $\gamma$ joining $z$ to $\sigma$ so that $\sigma$ is invariant under hyperbolic reflection in $\gamma$.
Consider the Riemann map $\phi:\Sigma \rightarrow \mathbb{D}$ such that $\phi(z)=0$ and $\phi( \gamma)$ is the interval $[0,1)$. Then $1$ is the midpoint of the arc $\phi(\sigma)$. Let $\theta' \in (0,\pi)$ be the infimal angle of $\theta \in (0, \pi)$ for which $|f(\phi^{-1}(e^{i\theta}))| \neq 1$ and take $\eta \in (\cos \theta',1)$. 
\begin{center}
\begin{tikzpicture}[scale=0.95]
\draw [->] (10.5,0) -- (12,0);
\draw  (15,0) circle [radius=2];
\draw  (6,0) arc [radius=2, start angle=0, end angle= 45];
\draw  (6,0) arc [radius=2, start angle=0, end angle= -45];
\draw  (5,1) to [out=20,in=160] (7,1)
to [out=-20,in=160] (7,1) to [out=-20,in=-155] (9.5,1);
\draw  (5,-1) to [out=20,in=160] (7,-1)
to [out=-20,in=160] (7,-1) to [out=-20,in=-155] (9.5,-1);
\draw  (9,0) arc [radius=2, start angle=0, end angle= 45];
\draw  (9,0) arc [radius=2, start angle=0, end angle= -55];
\draw (7,1) to [out=-20,in=160] (7,0);
\draw[fill] (7,0) circle [radius=0.025];
\node[left] at (7,0) {$z$};
\node at (7.2,0.3) {$\gamma$};
\node at (8.2,-0.7) {\LARGE $\Sigma$};
\node at (11.25,0.5) {$\phi$};
\node at (7,1.2) {$\sigma$};
\draw[fill] (15,0) circle [radius=0.025];
\node [below] at (15,0) {$0$};
\draw [ultra thick] (17,0) arc [radius=2, start angle=0, end angle= 45];
\draw [ultra thick] (17,0) arc [radius=2, start angle=0, end angle= -45];
\draw [ultra thick] (13,0) arc [radius=2, start angle=180, end angle= 160];
\draw [ultra thick] (13,0) arc [radius=2, start angle=180, end angle=  200];
\node [right] at (16.9,0.8) {$\phi(\sigma)$};
\node [right] at (16.4,1.6) {$e^{i\theta'}$};
\draw  (15,0) -- (17,0);
\draw[fill] (16.5,0) circle [radius=0.025];
\node [above] at (16.5,0) {$\eta$};
\node [below] at (16,0) {$\phi(\gamma)$};
\node at (14.1,-1.1) {\LARGE $\mathbb{D}$};
\end{tikzpicture}
\end{center}

In the following $P_\theta(\eta)=(1-\eta^2)/|\eta-e^{i\theta}|^2$ denotes the Poisson kernel of $\mathbb{D}$ with singularity at $e^{i \theta}$.
\begin{lemma}\label{MCover}
Let $f$, $\Sigma$, $\sigma$, $\phi$, $\theta'$, and $\eta \in (\cos \theta',1)$  be as above and  such that
\begin{align}\label{Mbound}
\log |f(\phi^{-1}(0))|&>{\frac{20\pi \eta}{(1-P_{\theta'}( \eta))(1-\eta)}}.
\end{align}
Then,
\[ f(\Sigma) \supset \bar A(|f(\phi^{-1}(\eta))|, |f(\phi^{-1}(0))|).\]
\end{lemma}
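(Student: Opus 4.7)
The plan is to apply \Cref{hypcover} with $z_1 = \phi^{-1}(\eta)$ and $z_2 = \phi^{-1}(0)$, taking $K = |f(\phi^{-1}(0))|/|f(\phi^{-1}(\eta))|$ so that $|f(z_2)| = K|f(z_1)|$ holds by construction. Because $\phi : \Sigma \to \mathbb{D}$ is a Riemann map and hence an isometry of the hyperbolic metrics,
\[\rho_\Sigma(z_1, z_2) = \rho_{\mathbb{D}}(\eta, 0) = \tfrac{1}{2}\log \tfrac{1+\eta}{1-\eta}.\]
The hyperbolic-distance hypothesis of \Cref{hypcover} is $\rho_\Sigma(z_1,z_2) < \tfrac{1}{2}\log\bigl(1 + (\log K)/(10\pi)\bigr)$, and since $(1+\eta)/(1-\eta) - 1 = 2\eta/(1-\eta)$, this is equivalent to $\log K > 20\pi \eta/(1-\eta)$. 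Producing a matching lower bound on $\log K$ out of hypothesis \eqref{Mbound} is the main task.

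To obtain this lower bound, I would work with the nonnegative harmonic function $u := \log |f \circ \phi^{-1}|$ on $\mathbb{D}$. Since $\Sigma$ is a quadrilateral with piecewise smooth boundary, Carath\'eodory's theorem provides a homeomorphic extension of $\phi$ to $\overline{\Sigma}$, so that $u$ extends continuously to $\overline{\mathbb{D}}$ with $u \geq 0$ on $\partial \mathbb{D}$; by the definition of $\theta'$ combined with the normalization making $1$ the midpoint of $\phi(\sigma)$, we have $u(e^{i\theta}) = 0$ whenever $|\theta| < \theta'$. The Poisson integral formula then gives
\begin{align*}
u(0)    &= \frac{1}{2\pi}\int_{\theta' \leq |\theta| \leq \pi} u(e^{i\theta})\, d\theta, \\
u(\eta) &= \frac{1}{2\pi}\int_{\theta' \leq |\theta| \leq \pi} u(e^{i\theta})\, P_\theta(\eta)\, d\theta.
\end{align*}
For fixed $\eta \in (0,1)$, the kernel $\theta \mapsto P_\theta(\eta) = (1-\eta^2)/(1-2\eta \cos\theta + \eta^2)$ is maximized at $\theta = 0$ and decreasing in $|\theta|$ on $[0,\pi]$, so $P_\theta(\eta) \leq P_{\theta'}(\eta)$ throughout the range of integration. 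Pulling this constant outside yields $u(\eta) \leq P_{\theta'}(\eta)\, u(0)$, and subtracting gives
\[\log K = u(0) - u(\eta) \geq \bigl(1 - P_{\theta'}(\eta)\bigr) \log |f(\phi^{-1}(0))|.\]

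Finally, the assumption \eqref{Mbound} rearranges exactly to $\bigl(1 - P_{\theta'}(\eta)\bigr) \log |f(\phi^{-1}(0))| > 20\pi\eta/(1-\eta)$, so $\log K > 20\pi \eta/(1-\eta)$, as required. Both hypotheses of \Cref{hypcover} are then satisfied with our choice of $z_1$, $z_2$, and $K$, and the conclusion $f(\Sigma) \supset \bar{A}(|f(\phi^{-1}(\eta))|, |f(\phi^{-1}(0))|)$ follows immediately. I expect the main obstacle to be the harmonic-majorant step: one must justify that $u$ has the right continuous boundary values (via the Carath\'eodory extension and the piecewise smoothness of $\partial \Sigma$) and correctly identify the arc on which $u \equiv 0$ as containing $\{e^{i\theta} : |\theta| < \theta'\}$ through the chosen normalization of $\phi$. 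Once this is in place, the monotonicity of $P_\theta(\eta)$ in $|\theta|$ converts the Poisson representation into the precise numerical bound required by \Cref{hypcover}.
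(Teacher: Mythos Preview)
Your proposal is correct and follows essentially the same route as the paper: define $u=\log|f\circ\phi^{-1}|$, use the Poisson integral and the monotonicity of $P_\theta(\eta)$ in $|\theta|$ to get $u(\eta)\le P_{\theta'}(\eta)\,u(0)$, and then convert hypothesis~\eqref{Mbound} into the hyperbolic-distance inequality needed for \Cref{hypcover}. The only cosmetic difference is that you take $K$ to be the exact ratio $|f(\phi^{-1}(0))|/|f(\phi^{-1}(\eta))|$ whereas the paper takes $\log K=(1-P_{\theta'}(\eta))u(0)$; either choice works, and your discussion of the Carath\'eodory extension makes explicit a boundary-regularity point the paper leaves implicit.
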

\begin{proof}
Consider $u(z)= \log|f(\phi^{-1}(z))|$. We estimate the value of the function $u$ at $\eta$.
Since $u$ is harmonic in $\mathbb{D}$ and vanishes on $\phi(\sigma)$,
\begin{align*}
u(\eta) & = \frac{1}{2 \pi} \int_0^{2 \pi}  P_{\theta} (\eta) u(e^{i \theta}) d \theta \\
& \leq \frac{P_{\theta'}( \eta)}{2 \pi} \int_0^{2 \pi}  u(e^{i \theta}) d \theta\\
&= P_{\theta'}( \eta) u(0),
\end{align*}
by the definition of $\theta'$ and the mean value theorem. Hence,
\[u(0)- u(\eta) \geq (1-P_{\theta'}( \eta)) u(0).\]
Now we let $\log K = (1-P_{\theta'}( \eta)) u(0)$. Then $K>1$ since $P_{\theta'}( \eta) <1$.
Therefore, by the formula for the hyperbolic distance in \cite[page 688]{Hayman},
\allowdisplaybreaks
\begin{align*}
\rho_\mathbb{D} (0, \eta) &= \frac{1}{2} \log\left(\frac{1+\eta}{1-\eta}\right) < \frac{1}{2} \log \left(1+ \frac{\log K}{10\pi}\right)\\
&\iff \left( \frac{1+\eta}{1-\eta}\right) <1+ \frac{\log K}{10\pi} \\
&\iff 10\pi \left( \frac{2 \eta}{1-\eta} \right) < \log K \\
& \iff 10\pi \left( \frac{2 \eta}{1-\eta} \right) < (1-P_{\theta'}( \eta)) u(0) \\
&\iff u(0)>{\frac{20\pi \eta}{(1-P_{\theta'}( \eta))(1-\eta)}}.
\end{align*}
Hence, by \eqref{Mbound}, the conditions of \Cref{hypcover} are satisfied and the result follows.
\end{proof}
We now apply the previous lemmas in order to prove \Cref{MySlow3}. We first estimate the function value inside the tract compared to the hyperbolic distance. We then apply \Cref{MCover} and construct a sequence of domains to which we can apply \Cref{Finish}.

\begin{proof}[Proof of \Cref{MySlow3}]
Without loss of generality we can let $a_n$ be any increasing positive sequence such that  $a_n \rightarrow \infty$ as $n \rightarrow \infty$.
Let $D$ be a direct tract of $f$ with bounded geometry with respect to harmonic measure. Consider a quadrilateral $\Sigma_n$ and the point $z_n \in \Sigma_n$ as in \Cref{BGRHM}. Then by \Cref{BGRHM} there exists $\varepsilon$ independent of $n$, $\sigma_n$, and $\theta_n'$ such that $\theta_n' \geq \varepsilon>0$ for all $n \geq 0$. Let $\eta_n \in (\cos \theta_n',1)$ be chosen so that
\begin{equation}\label{Ptheta}
P_{\theta'_n}( \eta_n)\leq \frac{C(\varepsilon)}{\log |f(z_n)|},
\end{equation}
where $C(\varepsilon)$ is a constant, to be chosen, that depends solely on $\varepsilon$.
To apply \Cref{MCover}, we further need to choose $\eta_n$ such that
\begin{equation}\label{MD}
\log |f(z_n)|>{\frac{20\pi \eta_n}{(1-P_{\theta'_n}( \eta_n))(1-\eta_n)}}.
\end{equation}
 So, we are choosing $\eta_n$ very close to $1$ and $P_{\theta'_n}( \eta_n)$ close to $0$ for sufficiently large $|f(z_n)|$. We want \eqref{Ptheta}  and \eqref{MD} not to conflict. 

We have by definition that $P_{\theta'_n}(\eta_n)=(1-\eta_n^2)/{|\eta_n-e^{i\theta'_n}|^2}$, so we want both
\[(1-\eta_n)\log |f(z_n)| \leq \frac{C(\varepsilon)}{1+\eta_n} |\eta_n-e^{i\theta'_n}|^2 \]
and
\[(1-\eta_n)\log |f(z_n)|> \frac{20\pi \eta_n}{1-P_{\theta'_n}(\eta_n)} \approx 20\pi,\]
from \eqref{Ptheta} and \eqref{MD}, to be true. First, since $\cos \theta'_n < \eta_n < 1$, we can observe that
\[| \eta_n - e^{i \theta'_n}| > \sin \theta'_n \geq \sin \varepsilon.\]
So, it is sufficient to choose $\eta_n \in (\cos \theta'_n,1)$ such that

\[40\pi\leq(1-\eta_n)\log |f(z_n)| \leq \frac{1}{2}C(\varepsilon)\sin^2 \varepsilon. \]
This choice is possible if  ${C(\varepsilon)=160 \pi/ \sin^2 \varepsilon}$ and ${\log |f(z_n)| \geq 40 \pi/ (1-\cos \varepsilon)}$, for $n\geq N$ say, (for example, take $\eta_n=1-40\pi/\log|f(z_n)|$).

Hence, by \Cref{MCover} and \Cref{BGRHM},
\[f(\Sigma_n) \supset  \overline{\Sigma}_{n+1}, ~ \text{for}~ n \geq N. \]
Further, we may assume that
\[f(\Sigma_n) \supset  \overline{\Sigma}_n, ~ \text{for}~ n \geq N,\]
since we have $\log|f(\phi^{-1}(\eta_n))|\leq P_{\theta'_n}(\eta_n) \log |f(z_n)| < C(\varepsilon)$, by \eqref{Ptheta} and the reasoning at the start of the proof of \Cref{MCover}.

Relabeling, we obtain a sequence of domains of the form $\Sigma_n$, such that
\[f( \overline{\Sigma}_n) \supset \overline{\Sigma}_n \cup \overline{\Sigma}_{n+1}, ~~\text{for $n \geq 0$}.\]
Applying \Cref{Finish}, we obtain the desired result.
\end{proof}
\section{Examples}
In this section, we give two concrete examples which demonstrate both the kinds of tracts that exist and some methods with which to construct points that escape arbitrarily slowly in these tracts. 
First, we give an example of a function to which we can apply \Cref{hypcover} to construct slow escaping points in a direct tract with no logarithmic singularities. In forthcoming work, we will show this function is in the class $\mathcal{B}$, thus giving an example of a function in the class $\mathcal{B}$ with no logarithmic singularity over any finite value. This example is the reciprocal of the entire function studied in \cite{BE08}; for an illustration of the tracts of this function, see \cite[Figure 1]{BE08}.
\begin{example}
Consider the entire function
\[f(z)=\exp(-g(z)), ~\text{where} ~g(z)=\sum_{k=1}^{\infty} \left(\frac{z}{2^k}\right)^{2^k}.\]
Then, 
\begin{enumerate}
\item no direct tract $D$ of $f$ contains any logarithmic tracts, and
\item there exists $z \in D$ such that $f^n(z) \rightarrow \infty$ arbitrarily slowly to any given direct singularity of $f$.
\end{enumerate}

First, we recall from Bergweiler and Eremenko \cite[Section 6]{BE08} that $\exp(g(z))$ has uncountably many direct singularities over $0$, but no logarithmic singularity over any finite value. So, $f$ has infinitely many direct singularities over $\infty$, but no logarithmic singularity over $\infty$, and hence no tract of $f$ contains any logarithmic tracts. In order to prove that  $\exp(g(z))$ has infinitely many direct singularities over $0$, but no logarithmic singularity over any finite value, they show that there is an infinite binary tree in $\mathbb{C}$ for which every unbounded path on the tree is an asymptotic path along which $\operatorname{Re} g(z) \rightarrow -\infty$ as $z \rightarrow \infty$. The direct singularities correspond to different branches of this infinite tree. We shall use the estimates they give in obtaining this result in order to prove (2).

We now introduce the following notation and results from \cite[Section 6]{BE08}. We fix an $\varepsilon$ with $0<\varepsilon\leq \frac{1}{8}$ and set $r_n=(1+\varepsilon)2^{n+1}$ and $r_n'=(1-2\varepsilon)2^{n+2}$ for $n\in \mathbb{N}$. Then for $j\in\{0,1,\cdots,2^n-1\}$ we define the sets
\[A_{j,n}=\left\{r\exp\left(\frac{2\pi ij}{2^n}\right) : r\geq r_n \right\}\]
and
\[B_{j,n}=\left\{r\exp\left(\frac{\pi i}{2^n}+\frac{2\pi ij}{2^n}\right) :r_n\leq r\leq r'_n \right\}.\]
From \cite{BE08}, we have that $\operatorname{Re} g(z) >2^{2^n}$ for $z\in A_{j,n}$ and $\operatorname{Re} g(z) <-2^{2^n}$ for $z\in B_{j,n}$. Further, $\arg g(r e^{i \theta})$ is an increasing function of $\theta$, for $ r_n \leq r \leq r_n'$, and it increases by $2^n2\pi$ as $\theta$ increases by $2 \pi$.

 We show we have slow escaping points in the tracts of this function and can control the orbits of these points as they escape. To do this we need to further estimate the function in order to apply \Cref{hypcover}.

Fix $j$ and $n$ as above, and consider the annular sector, $\Sigma_{j,n}$, about $B_{j,n}$, bounded on the sides by $A_{j,n}$ to $A_{j+1,n}$ and from $r_n$ to $r_n'$. That is,
\[\Sigma_{j,n}=\left\{r e^{i\theta}: r_n \leq r \leq r_n' ~\text{and}~ \frac{2\pi j}{2^n} \leq  \theta \leq \frac{2\pi (j+1)}{2^n} \right\}.\]
 This domain will meet exactly one connected component of the tract intersected with $\overline{A}(r_n, r_n')$, by a counting argument, since $\arg g(r e^{i \theta})$ is an increasing function of $\theta$, and it increases by $2^n2\pi$ as $\theta$ increases by $2 \pi$. 
Let $z=r\exp\left(\frac{\pi i}{{2^{n+2}}}+\frac{2\pi i j} {2^n}\right)$ for $r_n \leq r \leq r_n'$ and $s=r/2^n \in [2(1+\varepsilon),4(1-2 \varepsilon)]$. Then we can evaluate the following terms of $g(z)$:
\[n\text{th term:}~~ s^{{2^n}}(\cos(\pi/4)+i \sin (\pi/4))= s^{{2^n}}(1/\sqrt{2}+i/\sqrt{2})\]
\[(n+1)\text{st term:}~~ (s/2)^{{2^{n+1}}}(\cos(\pi/2)+i \sin (\pi/2))= (s/2)^{{2^{n+1}}}i\]
\[(n+2)\text{nd term:}~~ (s/4)^{{2^{n+2}}}(-1)=-(s/4)^{{2^{n+2}}}\]
\[(n+3)\text{rd term:}~~ (s/8)^{{2^{n+3}}}.\]
So, we have that, for $z$ as above and where $C$ is some absolute constant,
\begin{align*}
|f(z)| &\leq \exp\left(s^{2^{n-1}}(n-1)2^{2^{n-1}}-s^{2^n}/\sqrt{2}-(s/4)^{{2^{n+2}}}-(s/8)^{{2^{n+3}}}-\ldots\right)\\
&\leq \exp\left(s^{2^{n-1}}\left((n-1)2^{2^{n-1}}-s^{2^{n-1}}/\sqrt{2}\right)-C\right)\\
&\leq \exp\left(s^{2^{n-1}}2^{2^{n-1}}\left((n-1)-(1+\varepsilon)^{2^{n-1}}/\sqrt{2}\right)-C\right) \rightarrow 0 ~ \text{as $n\rightarrow \infty$.}
\end{align*}
We now choose two points in $\Sigma_{j,n}$: \[z_1=\frac{r_n+r_n'}{2}\exp\left(\frac{\pi i}{{2^{n+2}}}+\frac{2\pi i j} {2^n}\right) ~\text{and}~ z_2=\frac{r_n+r_n'}{2}\exp\left(\frac{\pi i}{2^n}+\frac{2\pi ij}{2^n}\right).\] Then, $\rho_{\Sigma_{j,n}}(z_1, z_2)$ is bounded by some absolute constant. Since $z_2\in B_{j,n}$, we deduce that, for $n \geq N$ say, we have
\[\rho_{\Sigma_{j,n}}(z_1, z_2)<\frac{1}{2} \log \left( 1+ \frac{1}{10\pi}\log \frac{|f(z_2)|}{|f(z_1)|}\right).\]
So, by \Cref{hypcover},  $f(\Sigma_{j,n}) \supset \bar A(|f(z_1)|,|f(z_2)|) \supset \Sigma_{j,n} \cup \Sigma_{j,n+1}$ for $n \geq N$ and so we can apply \Cref{Finish} to obtain points that escape as slowly as we wish through any sequence $(\Sigma_{j_n,n})_{n\geq N}$, where $j_n \in \{0,1, \ldots, 2^n-1\}$, and in particular to any direct singularity.
\end{example}
Next, we give an example to show how we can sometimes obtain slow escaping points in a tract that is not simply connected.  We use \Cref{MySlow3} and then give an alternative method involving the zeros of the function. Note that the latter method relies on the existence of large critical values.
\begin{example} Consider the function 
\[f(z)=e^{z^2}\cos z .\]
Then $f$ has two multiply connected non-logarithmic tracts, each of which contains orbits of points that escape arbitrarily slowly.

\begin{figure}
\centering
\fbox{\includegraphics[width=7cm, height=7cm]{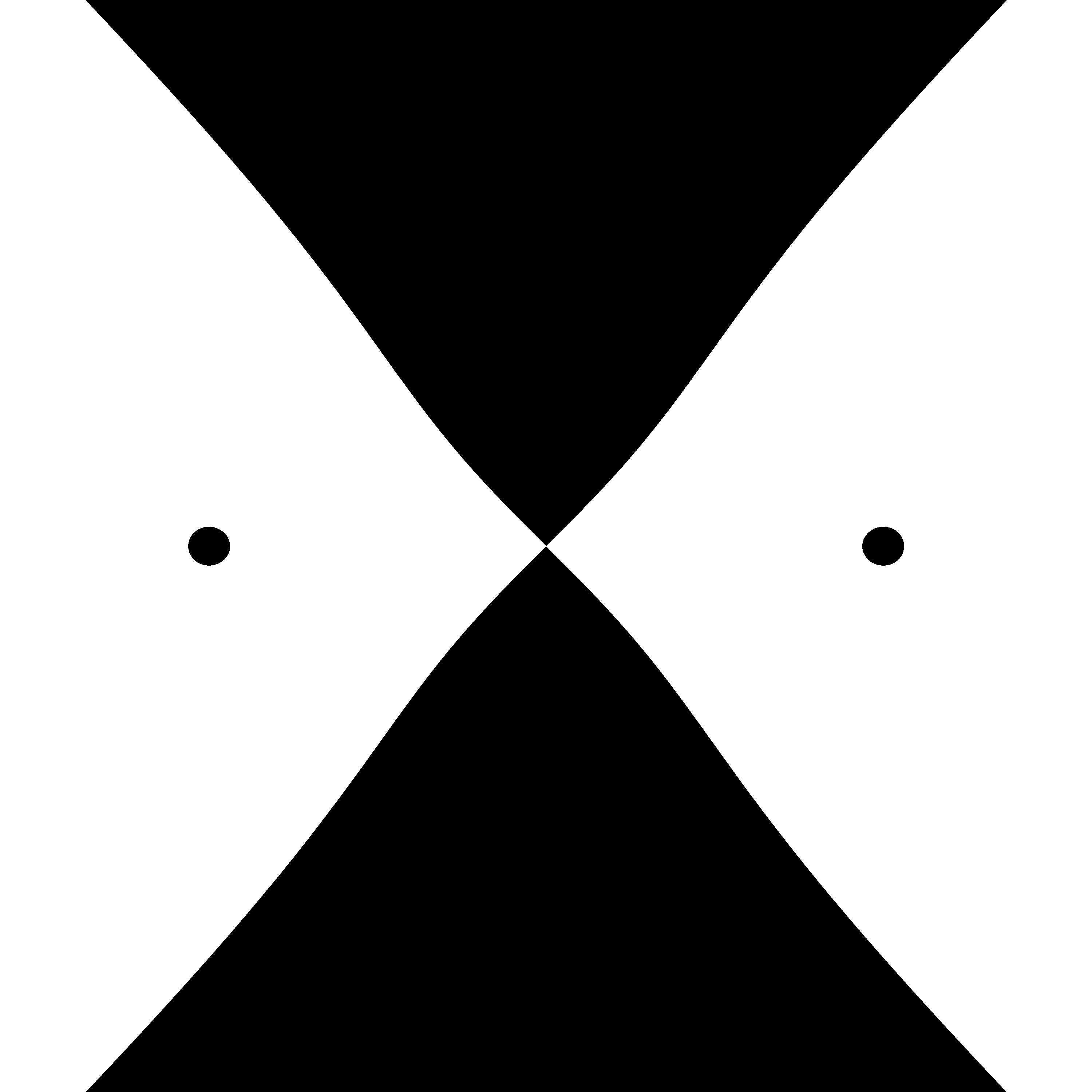}}
\caption{The tracts of $e^{z^2}\cos(z)$ in white, showing holes around the zeros at $\pm \frac{\pi}{2}$.}
\end{figure}

First, $f$ has zeros at $(2n+1) \frac{\pi}{2}$ for $n\in \mathbb{Z}$.  Call the rightmost tract $D$, where $|f(z)|=1$ on $\partial D$. As $e^{z^2}$ has two tracts that are quadrants with one symmetrically containing the positive real axis and the other the negative real axis, and $e^{z^2}$ grows much faster than $\cos|z|$, it is natural to assume the shape of the tract of $f$ will be similar away from the zeros of $\cos(z)$. In fact, it is easy to check that $\partial D$ consists of a Jordan curve passing through $\infty$ and lying in ${\{x+iy: x \geq 0, x \geq |y|\}}$ together with infinitely many bounded Jordan curves surrounding the zeros at $(2n+1) \frac{\pi}{2}$. 

Now, let ${\Sigma_n=A\left((2n+1)\frac{\pi}{2},(2n+3) \frac{\pi}{2}\right) \cap D \cap \mathbb{H}}$, for $n \geq 0$, and where $\mathbb{H}$ denotes the upper half-plane. Each $\Sigma_n$ is a simply connected domain that contains $z_n=(2n+2) \frac{\pi}{2} \exp({\frac{i \pi}{8}})$. This choice of $z_n$ ensures that $|f(z_n)| \geq (2n+5)\frac{\pi}{2}$. It is not difficult to see that the harmonic measure in $\Sigma_n$ at $z_n$ of the largest boundary component of $\Sigma_n \cap \partial D$ is uniformly bounded from below with respect to $n$. This can be seen by viewing $\Sigma_n$ as a quadrilateral with one side on the boundary of the tract and the ratio of the sides uniformly bounded.  Therefore, we can apply \Cref{MySlow3} to obtain slow escaping points in the tract $D$.

Finally, we outline another method to obtain slow escaping points for this function in the tract $D$. The critical points of $f$ satisfy ${z=\frac{1}{2} \tan(z)}$. These occur for large positive $z$ at $z=\frac{(2n+1) \pi}{2}-\varepsilon_n$, where $\varepsilon_n>0$. For such a point $z$, ${|\cos(z)| = \sin(\varepsilon_n)  \approx  \varepsilon_n}$.
We also have \[\frac{(2n+1) \pi}{2}-\varepsilon_n=\frac{1}{2} \tan\left(\frac{(2n+1) \pi}{2}-\varepsilon_n\right)\] and \[\left|\tan\left(\frac{(2n+1) \pi}{2}-\varepsilon_n\right)\right|=\frac{\cos(\varepsilon_n)}{\sin({\varepsilon_n})} \approx \frac{1-\frac{1}{2}\varepsilon_n^2}{\varepsilon_n} \approx {\frac{1}{\varepsilon_n}}.\] Hence, $\varepsilon_n \approx \frac{1}{(2n+1) \pi}$. Therefore the critical values of $f$ are approximately \[{\frac{1}{(2n+1) \pi} \exp\left(\frac{(2n+1)^2 \pi^2}{4} \right)}.\] 
The images of the level curves around a zero form circles for function values up to at least the critical value of the nearest critical point. By estimating $f$ on a quadrilateral containing such a critical point and a zero, we see that the domain $A\left(2n \frac{\pi}{2},(2n+2) \frac{\pi}{2}\right) \cap D$ contains all of these level curves and that the critical value of $f$ in this domain has modulus much larger than $(2n+4) \frac{\pi}{2}$. Hence, the conditions for \Cref{Finish} are satisfied and we again obtain points that escape arbitrarily slowly in $D$.
\end{example}

\subsection*{Acknowledgments}
I would like to thank my supervisors Prof. Phil Rippon and Prof. Gwyneth Stallard for their patient help and guidance in the preparation of this paper. I would also like to thank Vasiliki Evdoridou for many helpful comments.
\bibliographystyle{abbrv}
\bibliography{SlowEscapeArxiv}{}
\end{document}